\newtheorem{theorem}{Theorem}
\newtheorem{lemma}[theorem]{Lemma}
\newtheorem{corollary}[theorem]{Corollary}
\begin{document}


\title{Finite involutory quandles of two-bridge links with an axis}

\author{Blake Mellor}
\address{Loyola Marymount University, 1 LMU Drive, Los Angeles, CA 90045}
\email{blake.mellor@lmu.edu}

\date{}
\maketitle
\begin{abstract} To better understand the fundamental quandle of a knot or link, it can be useful to look at finite quotients of the quandle.  One such quotient is the $n$-quandle (or, when $n=2$, the {\em involutory} quandle).  Hoste and Shanahan \cite{HS2} gave a complete list of the links which have finite $n$-quandles; it remained to give explicit descriptions of these quandles. This has been done for several cases in \cite{CHMS} and \cite{HS1}; in the current work we continue this project and explicitly describe the Cayley graphs for the finite involutory quandles of two-bridge links with an axis.  \end{abstract}

\section{Introduction}
Every oriented knot and link $L$ has a fundamental quandle $Q(L)$. For tame knots, Joyce \cite{JO2, JO} showed that the fundamental quandle is a complete invariant (up to a change of orientation).  Unfortunately, classifying quandles is no easier than classifying knots - in particular, except for the unknot and Hopf link, the fundamental quandle is always infinite.  This motivates us to look at quotients of the quandle, such as the $n$-quandle $Q_n(L)$.  Hoste and Shanahan \cite{HS2} proved that the $n$-quandle $Q_n(L)$ is finite if and only if the $n$-fold cyclic branched cover of $S^3$ branched over $L$, has finite fundamental group. Using this result, together with Dunbar's \cite{DU} classification of all geometric, non-hyperbolic 3-orbifolds, they were able to give a complete list of all  knots and links in $S^3$ with finite $n$-quandle for some $n$ \cite{HS2}; see Table \ref{linktable}.  Hoste and Shanahan \cite{HS1} described the finite $n$-quandles associated to Montesinos links, and Crans, Hoste, Shanahan and the author \cite{CHMS} described those associated to two-bridge links, torus links and torus links with an axis.  This leaves four infinite families of links which have finite $n$-quandles - specifically, finite 2-quandles (also known as {\em involutory quandles}).  In the current paper we consider the involutory quandles of two-bridge links with an axis, denoted $L(k,p/q) \cup C$, as shown in Figure \ref{F:L(k,p,q)UC}.  We will give an explicit description of the Cayley graphs for these involutory quandles (Theorem \ref{T:Cayley}), and show that $\vert Q_2(L)\vert = 2q(\vert kq-p\vert +1)$ (Corollary \ref{C:cardinality}).

\begin{table}[htbp]
$$
\begin{array}{ccc}
\includegraphics[width=1.25in,trim=0 0 0 0,clip]{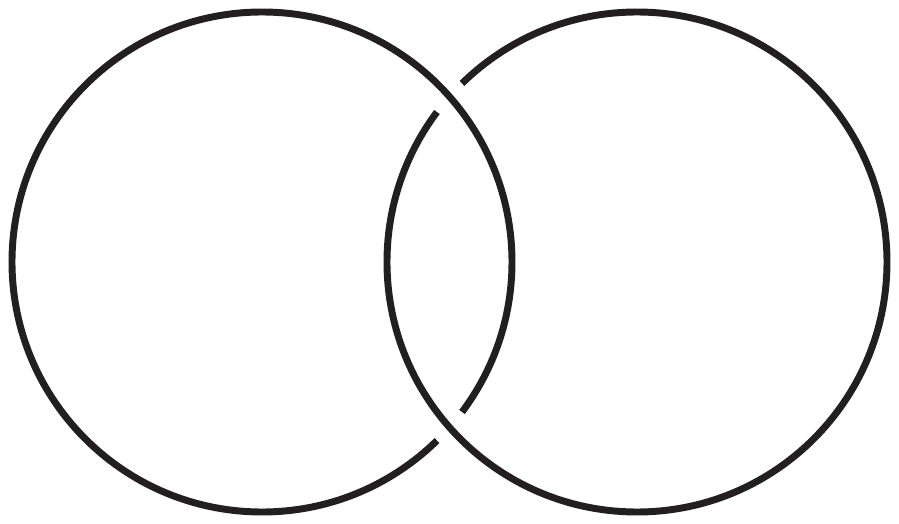}   & \includegraphics[width=1.25in,trim=0 0 0 0,clip]{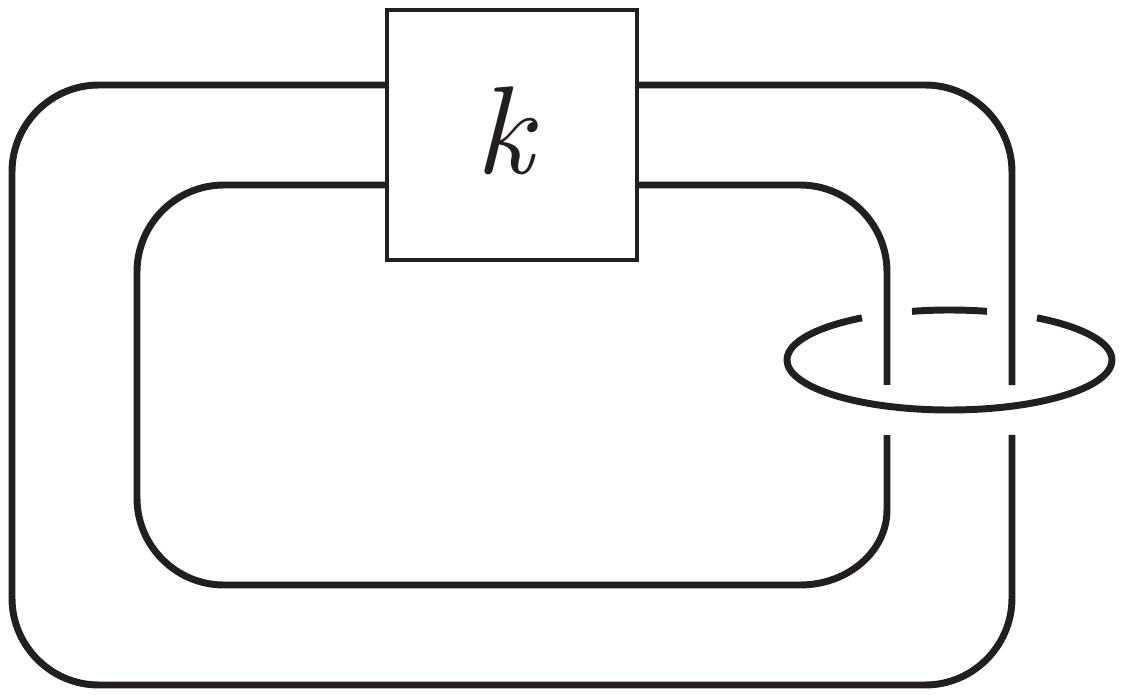}  & \includegraphics[width=1.0in,trim=0 0 0 0,clip]{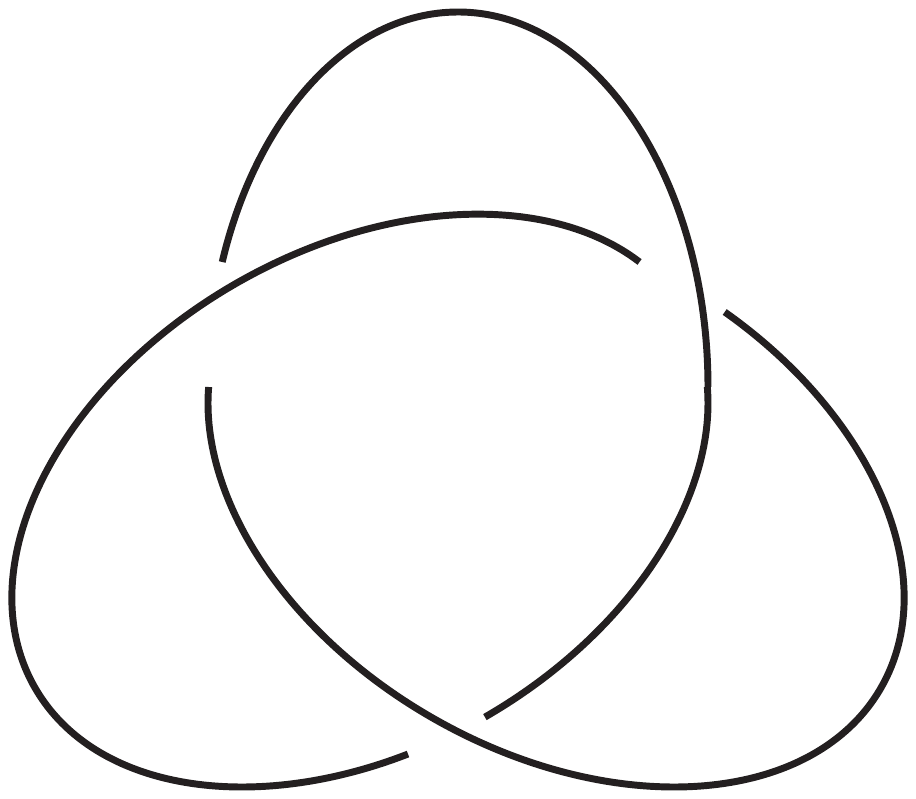} \\
\scriptstyle n > 1 & \scriptstyle k \neq 0,\  n=2& \scriptstyle n=3, 4, 5 \\
\\
\includegraphics[width=1.0in,trim=0 0 0 0,clip]{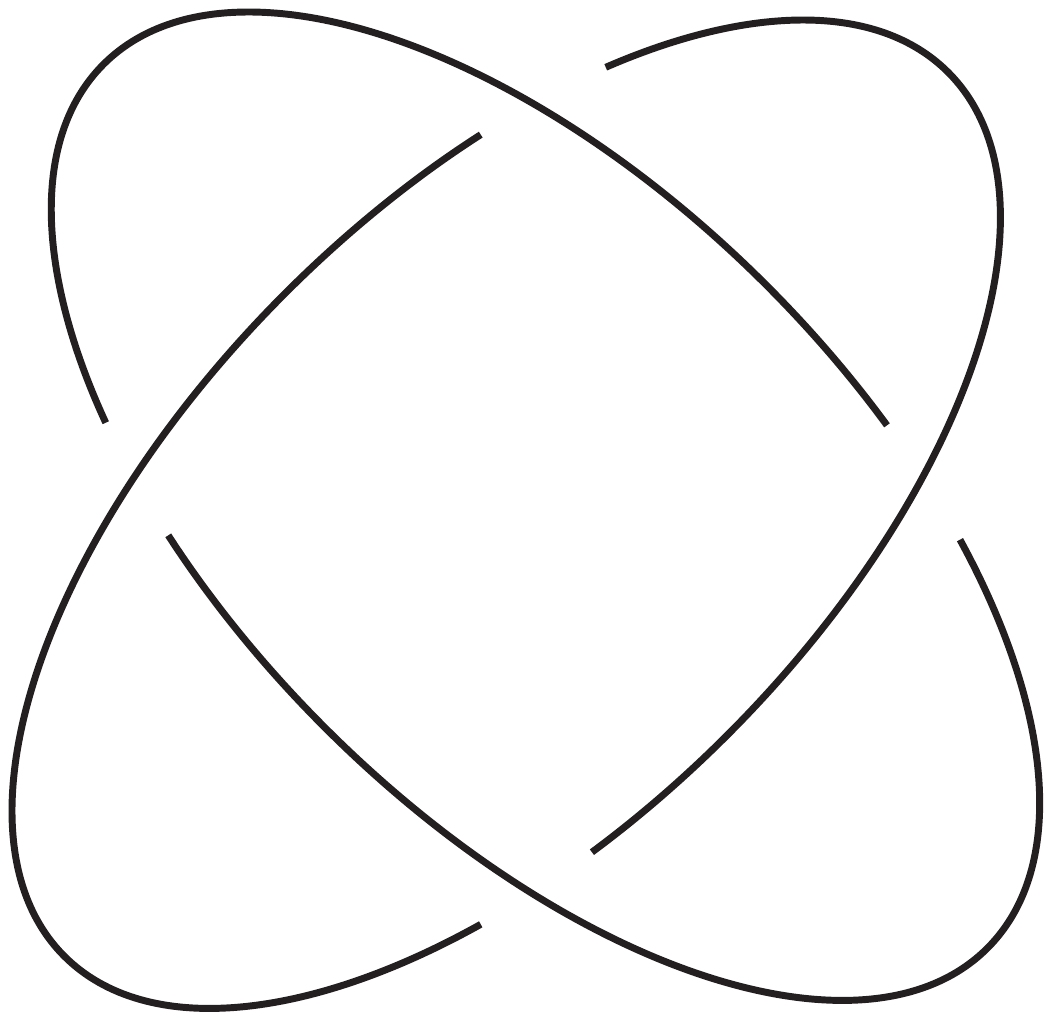}  & \includegraphics[width=1.0in,trim=0 0 0 0,clip]{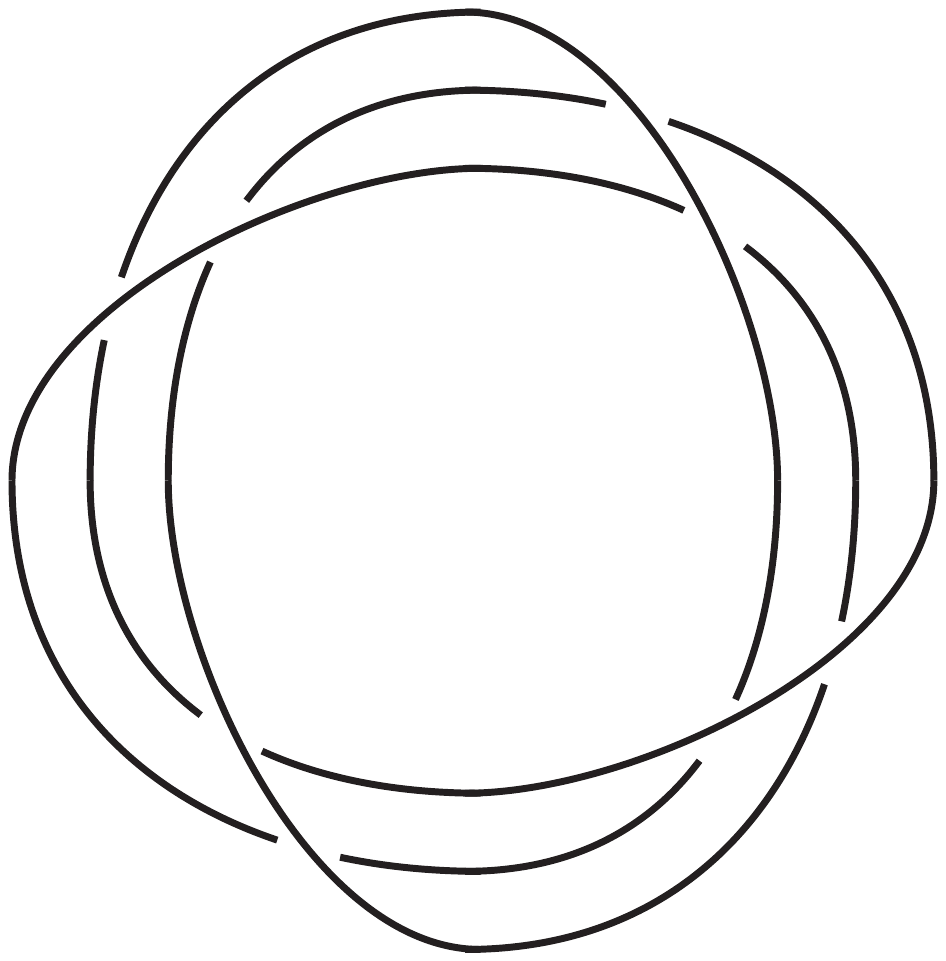}  & \includegraphics[width=1.0in,trim=0 0 0 0,clip]{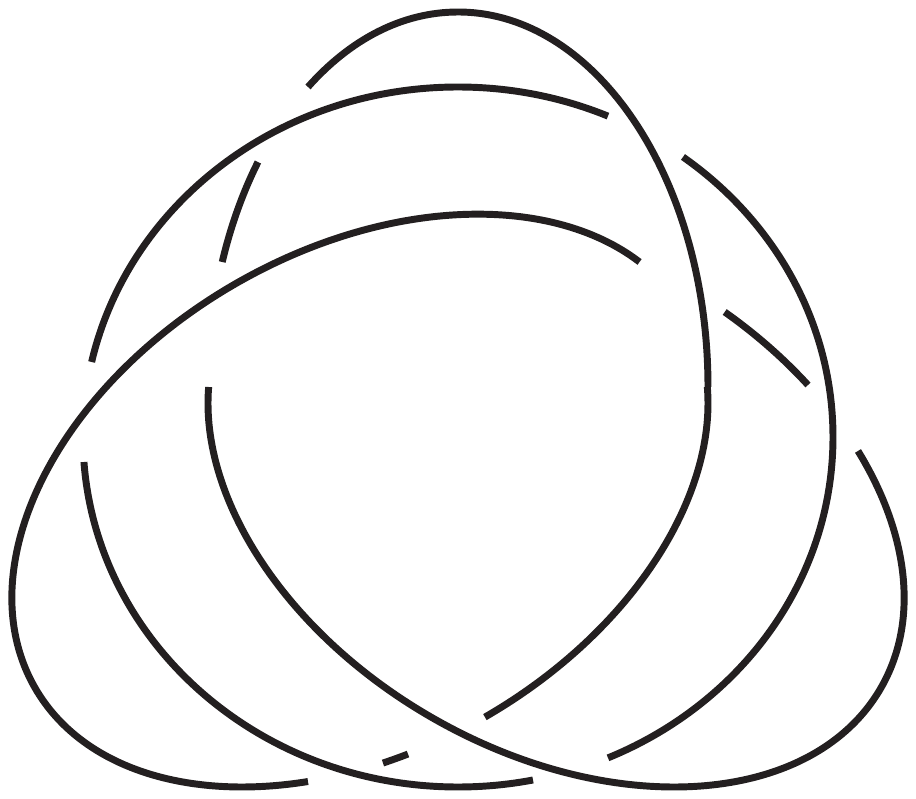} \\
\scriptstyle n =3 & \scriptstyle n=2& \scriptstyle n=2 \\
\\
\includegraphics[width=1.0in,trim=0 0 0 0,clip]{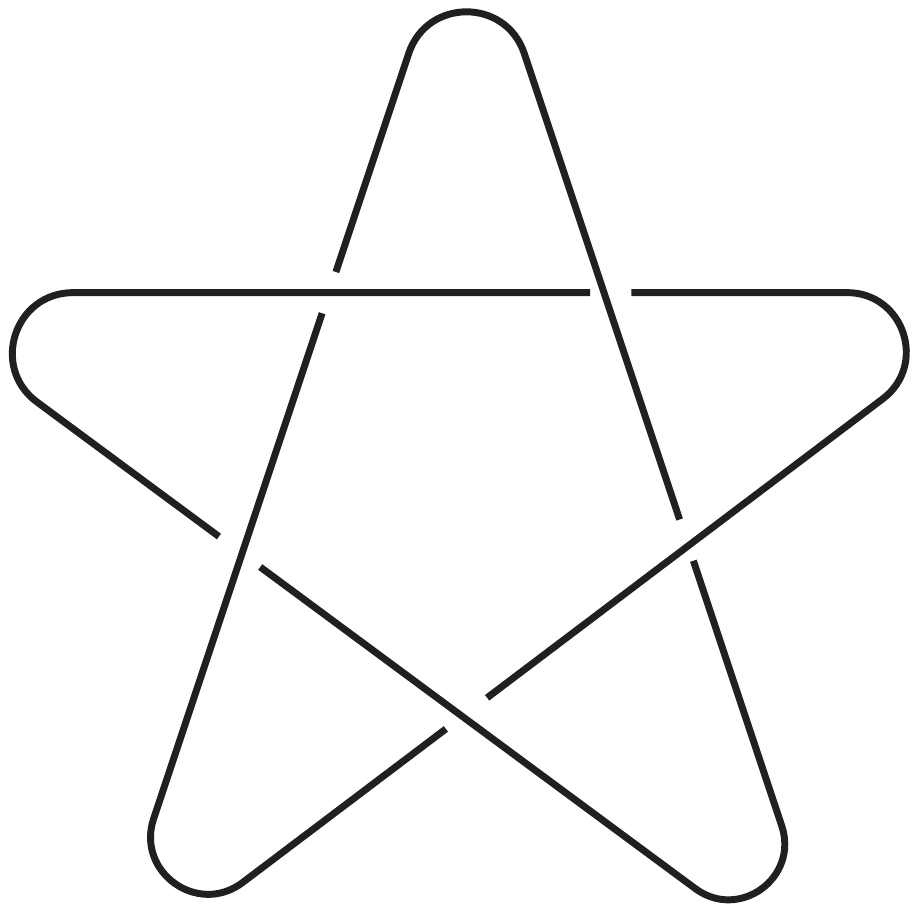}  & \includegraphics[width=1.0in,trim=0 0 0 0,clip]{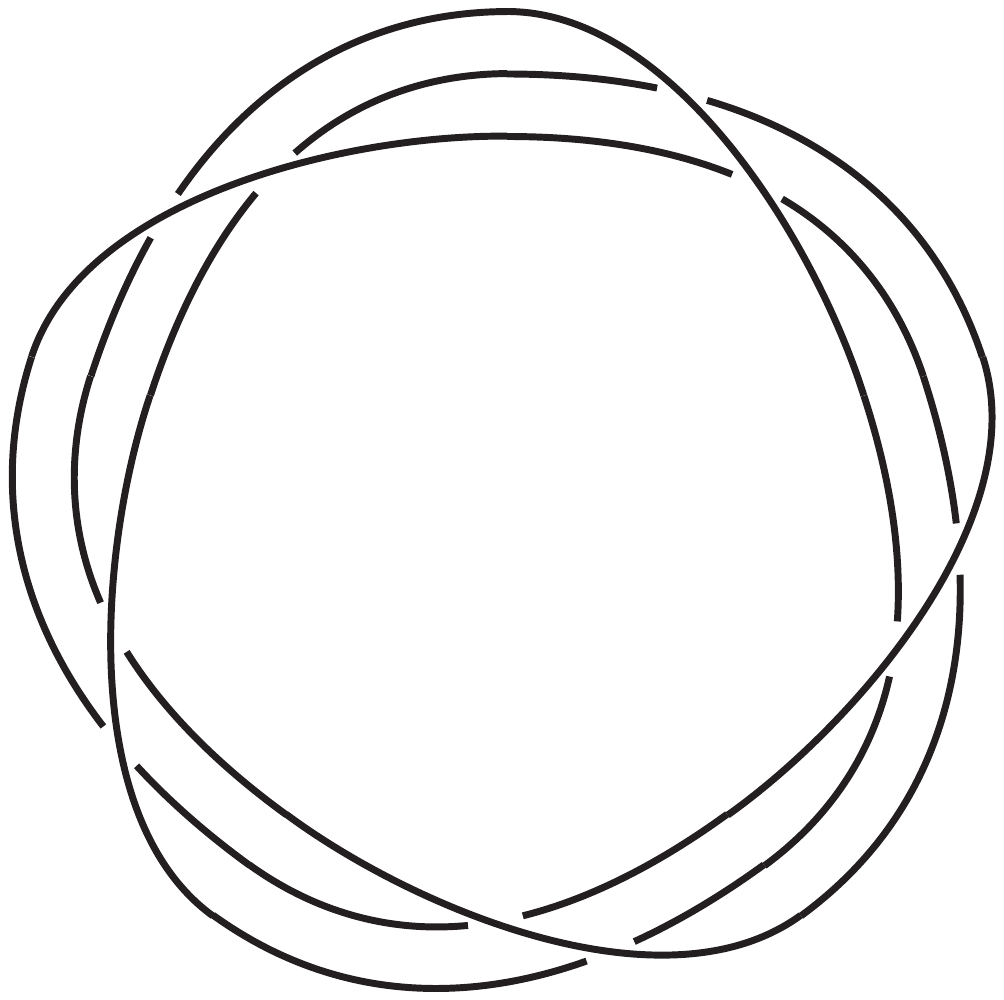}  & \includegraphics[width=1.25in,trim=0 0 0 0,clip]{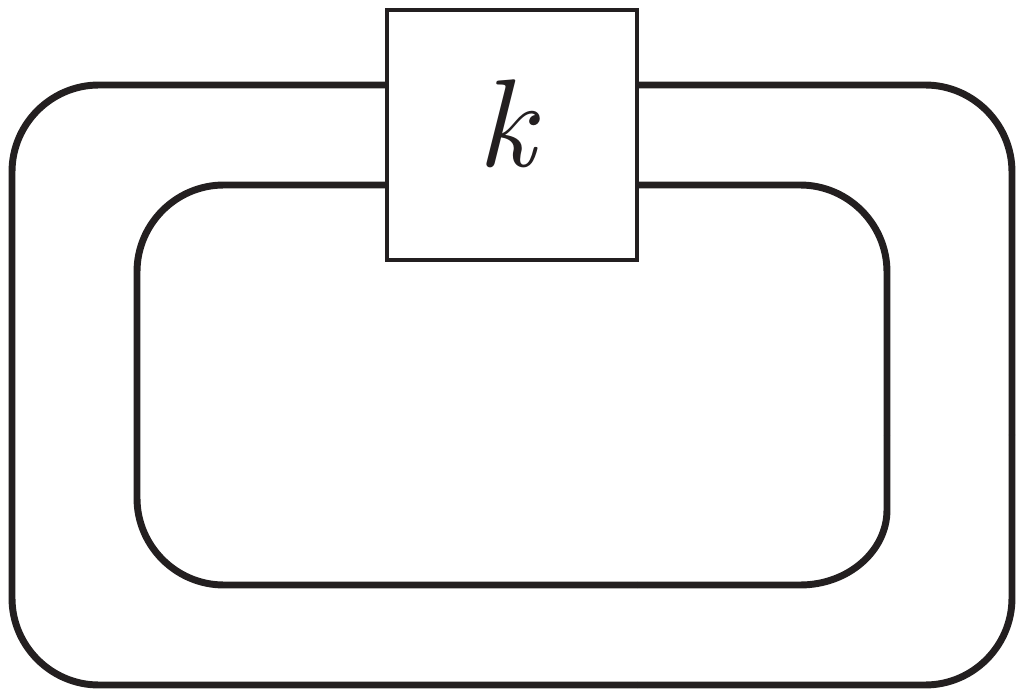} \\
\scriptstyle n =3 & \scriptstyle n=2& \scriptstyle k\neq0,\ n=2 \\
\\
\includegraphics[width=1.25in,trim=0 0 0 0,clip]{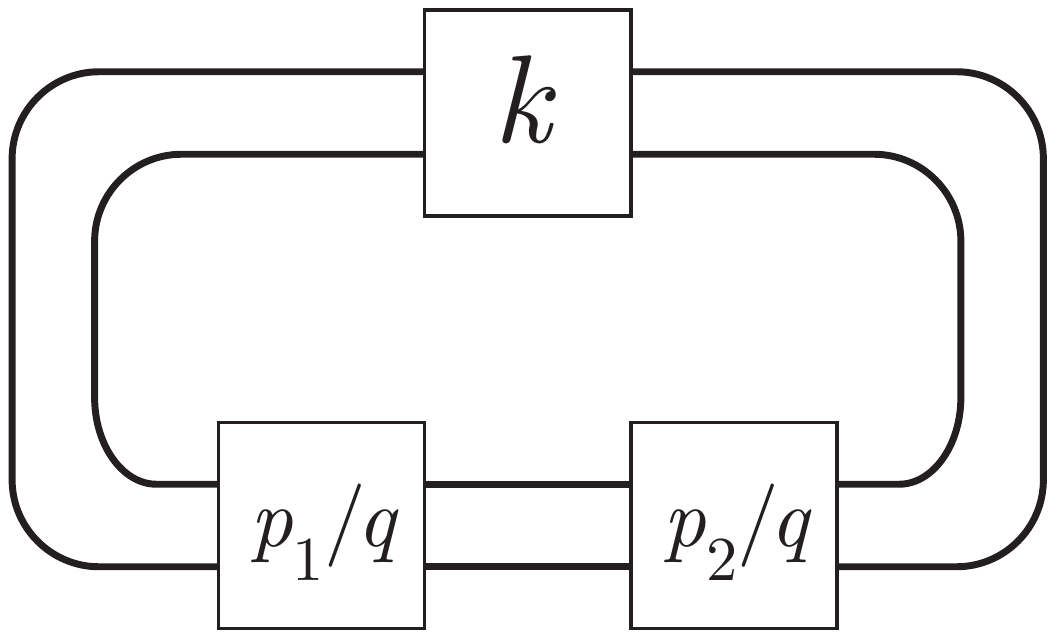}  & \includegraphics[width=1.15in,trim=0 5pt 0 0,clip]{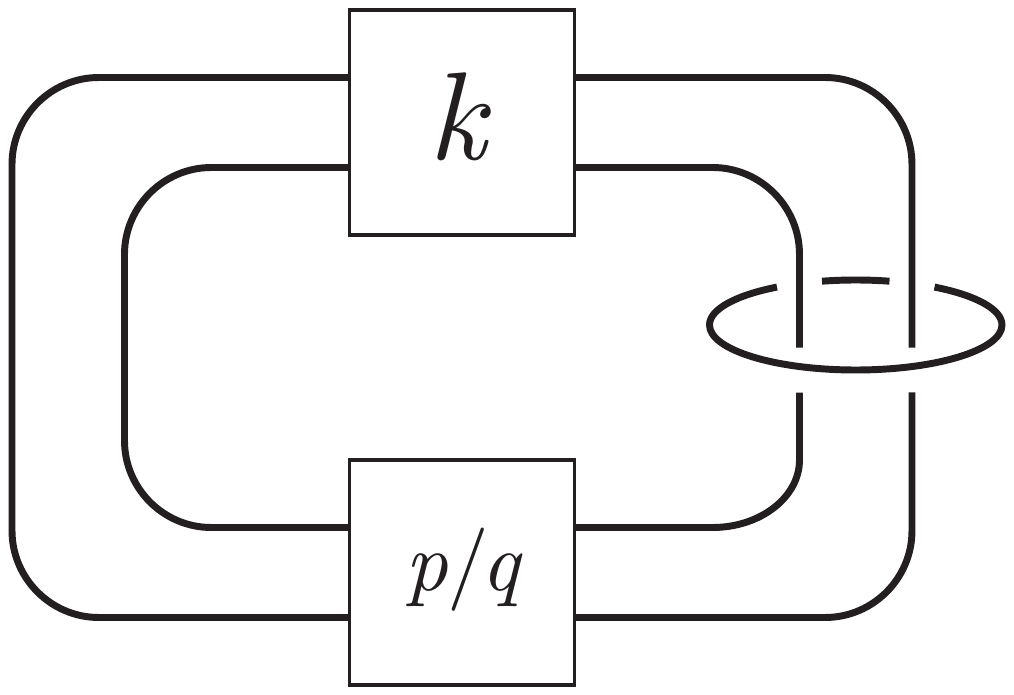} &  \includegraphics[width=1.65in,trim=0 0 0 0,clip]{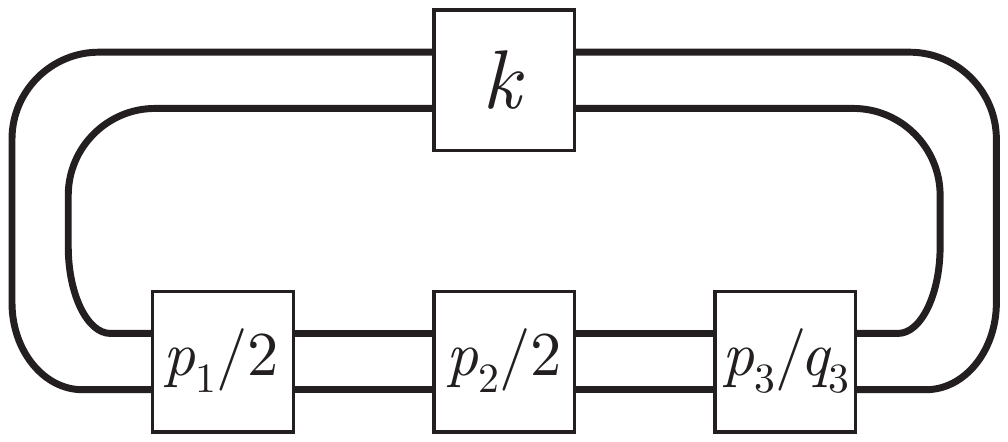} \\
\scriptstyle k+p_1/q+p_2/q \neq 0,\ n =2  &\scriptstyle n=2& \scriptstyle k+p_1/2+p_2/2+p_3/q_3 \neq 0,\ n =2 \\
\\
\includegraphics[width=1.65in,trim=0pt 0pt 0pt 0pt,clip]{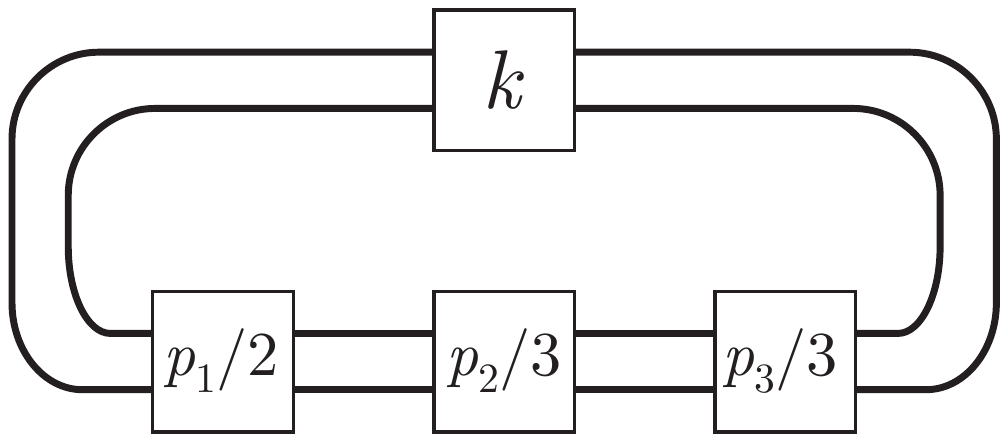}  & \includegraphics[width=1.65in,trim=0pt 0pt 0pt 0pt,clip]{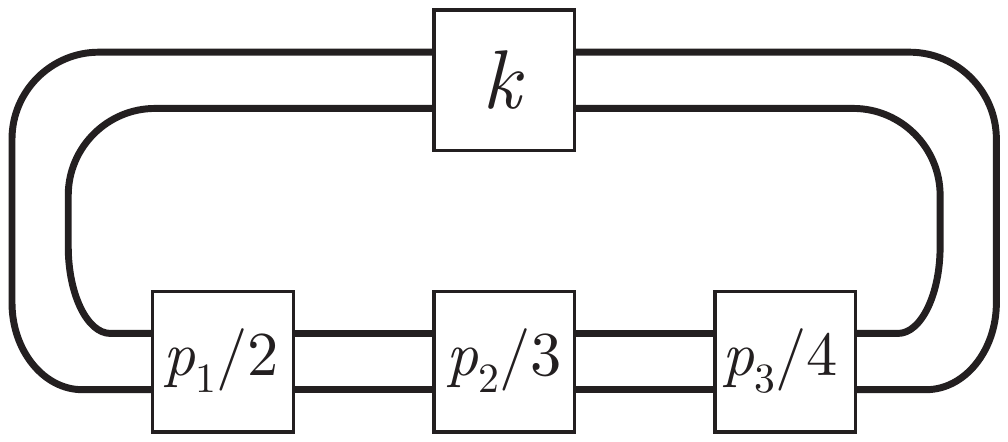}  & \includegraphics[width=1.65in,trim=0pt 0pt 0pt 0pt,clip]{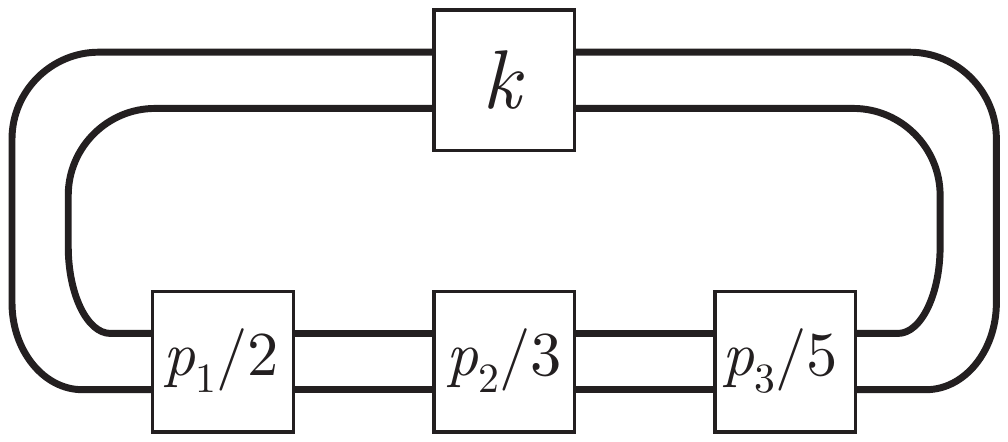}\\
\scriptstyle k+p_1/2+p_2/3+p_3/3 \neq 0,\ n =2  & \scriptstyle k+p_1/2+p_2/3+p_3/4 \neq 0,\ n =2  & \scriptstyle k+p_1/2+p_2/3+p_3/5 \neq 0,\ n =2
\end{array}
$$
\caption{Links $L \in \mathbb{S}^3$ with finite $Q_n(L)$}
\label{linktable}
\end{table}

\begin{figure}[htbp]
\centerline{\scalebox{.5}{\includegraphics{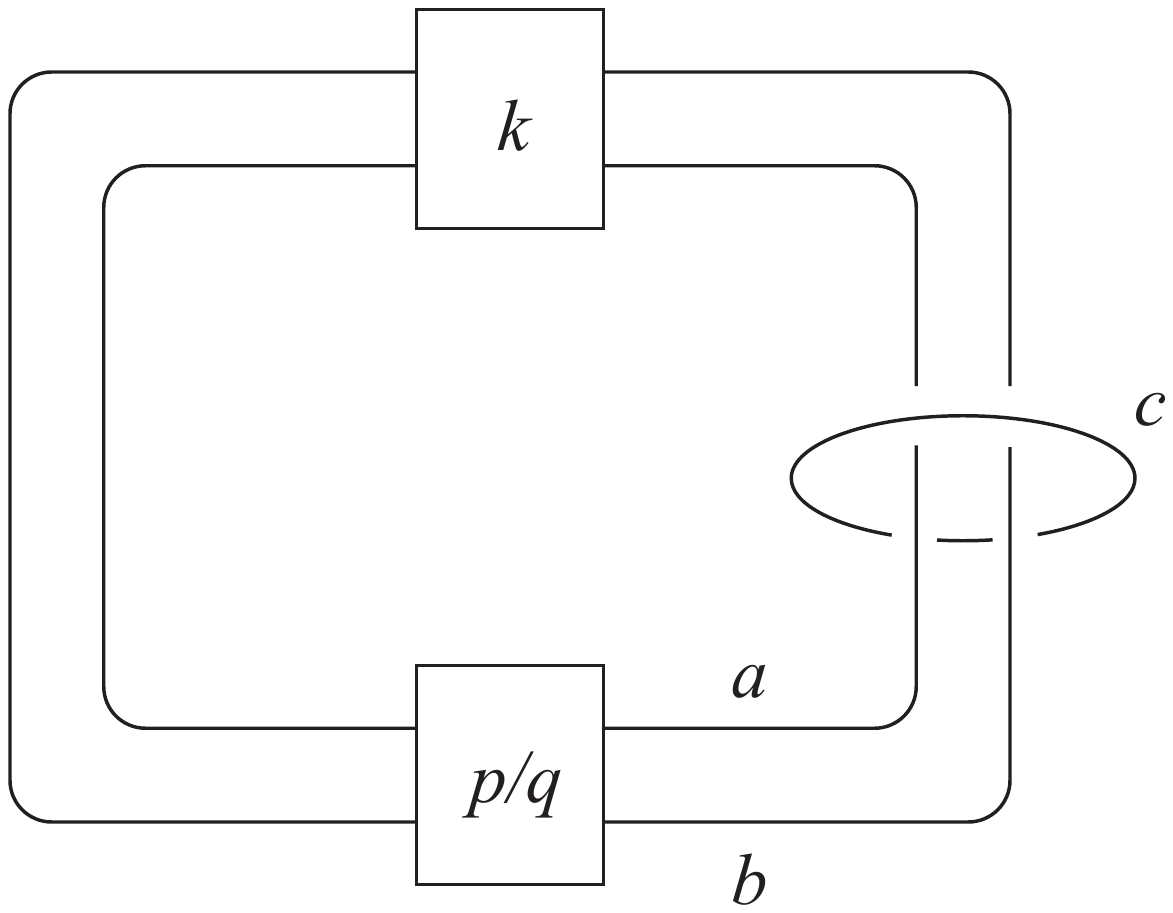}}}
\caption{The two bridge link $L(k,p/q)$ with axis $C$.}
\label{F:L(k,p,q)UC}
\end{figure}

\section{Link quandles} \label{S:quandles}

We begin with a review of the definition of the fundamental quandle of a link and its associated $n$-quandles. We refer the reader to \cite{FR}, \cite{JO2}, \cite{JO}, and \cite{WI} for more detailed information.

A {\it  quandle} is a set $Q$ equipped with two binary operations $\rhd$ and $\rhd^{-1}$ that satisfy the following three axioms:
\begin{itemize}
\item[\bf A1.] $x \rhd x =x$ for all $x \in Q$.
\item[\bf A2.] $(x \rhd y) \rhd^{-1} y = x = (x \rhd^{-1} y) \rhd y$ for all $x, y \in Q$.
\item[\bf A3.] $(x \rhd y) \rhd z = (x \rhd z) \rhd (y \rhd z)$ for all $x,y,z \in Q$.
\end{itemize}


An {\em involutory} quandle (or $2$-quandle) is the quotient $Q_2$ of $Q$ found by letting $\rhd^{-1} = \rhd$.  In this case, axiom A2 becomes $(x \rhd y) \rhd y = x$ for all $x, y \in Q_2$.  In this paper, we will primarily work with involutory quandles. 

It is important to note that the operation $\rhd$ is, in general, not associative. To clarify the distinction between $(x \rhd y) \rhd z$ and $x \rhd (y \rhd z)$, we adopt the exponential notation introduced by  Fenn and Rourke in \cite{FR} and denote $x \rhd y$ as $x^y$ and $x \rhd^{-1} y$ as $x^{\bar y}$. With this notation, $x^{yz}$ will be taken to mean $(x^y)^z=(x \rhd y)\rhd z$ whereas $x^{y^z}$ will mean $x\rhd (y \rhd z)$. 

The following useful lemma from \cite{FR} describes how to re-associate a product in a quandle given by a presentation. 

\begin{lemma} \label{leftassoc}
If $a^u$ and $b^v$ are elements of a quandle, then
$$\left(a^u \right)^{\left(b^v \right)}=a^{u \bar v b v} \ \ \ \ \mbox{and}\ \ \ \ \left(a^u \right)^{\overline{\left(b^v \right)}}=a^{u \bar v \bar b v}.$$
\end{lemma}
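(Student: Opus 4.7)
The plan is to recast axiom A3 in the exponential notation introduced before the statement, apply it to produce the first identity, and then derive a ``barred'' companion of A3 that gives the second identity in the same manner.

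First I would rewrite A3 in exponential form as $(x^y)^z = (x^z)^{(y^z)}$, or equivalently $(x^z)^{(y^z)} = x^{yz}$. The idea for the first identity is then to engineer a substitution in which the inner $z$-action coincides with the $u$ prefix and the conjugator $y^z$ coincides with $b^v$. Concretely, I would set $z = v$, $y = b$, and $x = a^{u\bar v}$. Axiom A2 gives $x^v = (a^{u\bar v})^v = a^u$, so A3 reads
\[
(a^u)^{(b^v)} \;=\; (x^v)^{(b^v)} \;=\; x^{bv} \;=\; a^{u\bar v b v},
\]
which is the first claim.

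For the second identity, the natural approach is to establish a dual to A3, namely $(x^{\bar y})^z = (x^z)^{\overline{(y^z)}}$. To get this, I would let $w = x^{\bar y}$, so that $w^y = x$ by A2; applying A3 to $(w^y)^z$ yields $(w^z)^{(y^z)} = x^z$, and then A2 rearranges this to $w^z = (x^z)^{\overline{(y^z)}}$. With this dual identity in hand, the same substitution $z = v$, $y = b$, $x = a^{u\bar v}$ used above (now with $\bar b$ in the relevant slot) gives
\[
(a^u)^{\overline{(b^v)}} \;=\; (x^v)^{\overline{(b^v)}} \;=\; (x^{\bar b})^v \;=\; a^{u\bar v\bar b v},
\]
as desired.

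There is really no serious obstacle here: the lemma is a bookkeeping exercise, and the whole content is recognizing that the awkward ``nested'' conjugation $(a^u)^{(b^v)}$ can be unfolded by A3 once we first insert $v\bar v$ into the exponent using A2. The only step that requires a bit of care is verifying the dual of A3, but this is immediate from A2 once A3 is available. After these two pieces are in place the rest is direct substitution.
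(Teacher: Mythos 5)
Your argument is correct, but note that the paper itself offers no proof of this lemma at all: it is quoted from Fenn and Rourke \cite{FR}, so there is no internal proof to compare against, and a self-contained derivation like yours is a genuine addition. Your two substitutions are sound: inserting $v\bar v$ via A2 so that $a^u = (a^{u\bar v})^v$, then unfolding with A3, gives the first identity, and your dual identity $(x^{\bar y})^z = (x^z)^{\overline{(y^z)}}$ (obtained by setting $w = x^{\bar y}$, applying A3 to $(w^y)^z$, and rearranging with A2) correctly yields the second. The one point you leave implicit is that A3 is stated only for a single quandle element $z$, whereas your $v$ is a word in the free group, possibly containing inverse letters; to apply ``A3 with $z=v$'' you need the routine induction that the action of each letter distributes over $\rhd$, which for an inverse letter requires the companion identity $(x^y)^{\bar z} = (x^{\bar z})^{(y^{\bar z})}$. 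That identity follows from A2 and A3 by exactly the same maneuver you already use for your dual (apply A3 to $\bigl((x^{\bar z})^{(y^{\bar z})}\bigr)^z$ and cancel with A2), i.e.\ each generator acts as a quandle automorphism whose inverse is again an automorphism, so word actions distribute. With that sentence added your proof is complete; as you say, the rest is bookkeeping.
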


Using Lemma~\ref{leftassoc}, elements in a quandle given by a presentation $\langle S \mid R \rangle$ can be represented as equivalence classes of expressions of the form $a^w$ where $a$ is a generator in $S$ and $w$ is a word in the free group on $S$ (with $\bar x$ representing the inverse of $x$).

In an involutory quandle, each element of $S$ has order 2 in the free group, so $\bar x = x$ for each $x \in S$.  If $w = x_1x_2\dots x_k$ is a word in the free group on $S$, then $\bar w = {\bar x_k} \dots {\bar x_2} \bar x_1 = x_k\dots x_2x_1$.

If $L$ is an oriented knot or link in $S^3$, then a presentation of its fundamental quandle, $Q(L)$, can be derived from a regular diagram $D$ of $L$ by a process similar to the Wirtinger algorithm \cite{JO}. We assign a quandle generator $x_1, x_2, \dots , x_n$ to each arc of $D$, then at each crossing introduce the relation $x_i=x_k^{ x_j}$ as shown in Figure~\ref{crossing}. It is easy to check that the three Reidemeister moves do not change the quandle given by this presentation so that the quandle is indeed an invariant of the oriented link.

\begin{figure}[h]
$$\includegraphics[height=1.25in]{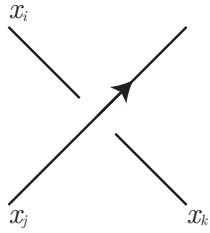}$$
\caption{The relation $x_i=x_k^{x_j}$ at a crossing.}
\label{crossing}
\end{figure}

The fundamental quandle of a link depends on the choice of orientation; however, its involutory quandle does not. Given an oriented link $L$ and a presentation $\langle S \,|\, R\rangle$ of $Q(L)$, a presentation of  the involutory quandle $Q_2(L)$ is obtained by adding the relations $x^{y^2}=x$ for every pair of distinct generators $x$ and $y$. Now the relation $x_i=x_k^{x_j}$ is equivalent to the relation $x_k=x_i^{x_j}$, so the involutory quandle does not depend on the orientation of the link.

Given a presentation of an involutory quandle, one can try to systematically enumerate its elements and simultaneously produce a Cayley graph of the quandle. Such a method was described in a graph-theoretic fashion by Winker in \cite{WI}. We provide a brief description of Winker's method applied to the involutory quandle of a link since it will be used extensively in this paper.  Suppose $Q_2(L)$ is presented as
$$Q_2(L)=\langle x_1, x_2, \dots, x_g \, |\, x_{j_1}^{w_1}=x_{k_1}, \dots, x_{j_r}^{w_r}=x_{k_r} \rangle_2,$$
where each $w_i$ is a word in the free group on $\{x_1,\dots , x_g\}$. Throughout this paper presentations of involutory quandles will not explicitly list the relations $x_i^{x_j^2}=x_i$ (nor the relations given by the quandle axioms) although we are implicitly assuming they hold. To avoid confusion we append the subscript $2$ to presentations of involutory quandles.

If $y$ is any element of the quandle, then it follows from the relation $x_{j_i}^{w_i}=x_{k_i}$ and Lemma~\ref{leftassoc} that $y^{\overline{w}_i x_{j_i}w_i}=y^{x_{k_i}}$, and so
$$y^{\overline{w}_i x_{j_i} w_i \overline{x}_{k_i}}=y.$$

Winker calls this relation the {\it secondary relation}  associated to the {\it primary relation} $x_{j_i}^{w_i}=x_{k_i}$. He also considers relations of the form $y^{ x_j^2 }=y$ for all $y$ and $1 \le j \le g$. These relations are equivalent to the secondary relations of the involutory quandle relations. In order to see this, notice that the secondary relation of the involutory quandle relation $x_k^{x_j^2}=x_k$ is $y=y^{\bar x_j^2 x_k x_j^2 \bar x_k}$ for all elements $y$.  Now given any $z$, if we let $y=z^{x_j^2}$ in this secondary relation, then $z^{x_j^2}=z^{x_j^2 \bar x_j^2 x_k x_j^2 \bar x_k} =  z^{x_k x_j^2 \bar x_k}$. Hence, $z^{x_j^2 x_k}=z^{x_k x_j^2}$ for all elements $z$. Now given any $y$ we have $y=x_i^w$ for some $1 \le i \le g$ and, by what we just observed, we can commute $x_j^2$ with $w$ in the exponent of $x_i$. Therefore,
$$y^{x_j^2} = x_i^{w x_j^2} = x_i^{x_j^2 w} = x_i^w =y.$$
Conversely, if $y^{x_j^2} =y$ for all $y$, then clearly $x_i^{x_j^2} =x_i$ as well.  

Winker's method now proceeds to build the Cayley graph associated to the presentation as follows:
  
\begin{enumerate}
\item Begin with $g$ vertices labeled $x_1,x_2, \dots, x_g$ and numbered $1,2, \dots,g$. 
\item Add an loop at each vertex $x_i$ and label it $x_i$. (This encodes the axiom A1.)
\item For each value of $i$ from $1$ to $r$, {\em trace} the primary relation $x_{j_i}^{w_i}=x_{k_i}$ by introducing new vertices and edges as necessary to create a path from $x_{j_i}$ to $x_{k_i}$ given by $w_i$. Consecutively number (starting from $g+1$) new vertices in the order they are introduced.  Edges are labelled with their corresponding generator. (Note that, for an involutory quandle, the edges are unoriented.)
\item Tracing a relation may introduce edges with the same label at a shared vertex. We identify all such edges, possibly leading to other identifications. This process is called {\it collapsing} and all collapsing is carried out before tracing the next relation. 
\item Proceeding in order through the vertices, trace and collapse each $n$-quandle relation $y^{x_j^2}=y$  and each secondary relation (in order). All of these relations are traced and collapsed at a vertex before proceeding to the next vertex.
\end{enumerate}

The method will terminate in a finite graph if and only if the involutory quandle is finite. The reader is referred to Winker~\cite{WI} and Hoste and Shanahan \cite{HS3} for more details.



\section{The links $L(k, p/q) \cup C$} \label{S:link}

The link $L(k, p/q) \cup C$ consists of the two-bridge link $L(k, p/q)$ linked with an unknotted component $C$, as in Figure \ref{F:L(k,p,q)UC}.  Here $k$ represents a number of right-handed half-twists, and $p/q$ represents a $p/q$-tangle. (It is not difficult to show that the link $L(k,p/q)$ is the two-bridge link with fraction $q/(p-kq)$, but this is not needed for our analysis.) We assume $q > 0$ and $\gcd(p,q) = 1$ (otherwise there will be additional components inside the $p/q$-tangle).  For example, Figure \ref{F:L(3,3/5)UC} shows the link $L(3, 3/5) \cup C$.  Using flypes, we can change $p$ by a multiple of $q$ at the expense of changing $k$.  In particular, if we replace $p$ by $p \pm q$, then we replace $k$ by $k \pm 1$.  So we may assume $0 < p < q$ (the case of $p = 0$, when the tangle is trivial, was considered in \cite{CHMS}).

\begin{figure}[htbp]
\centerline{\scalebox{.75}{\includegraphics{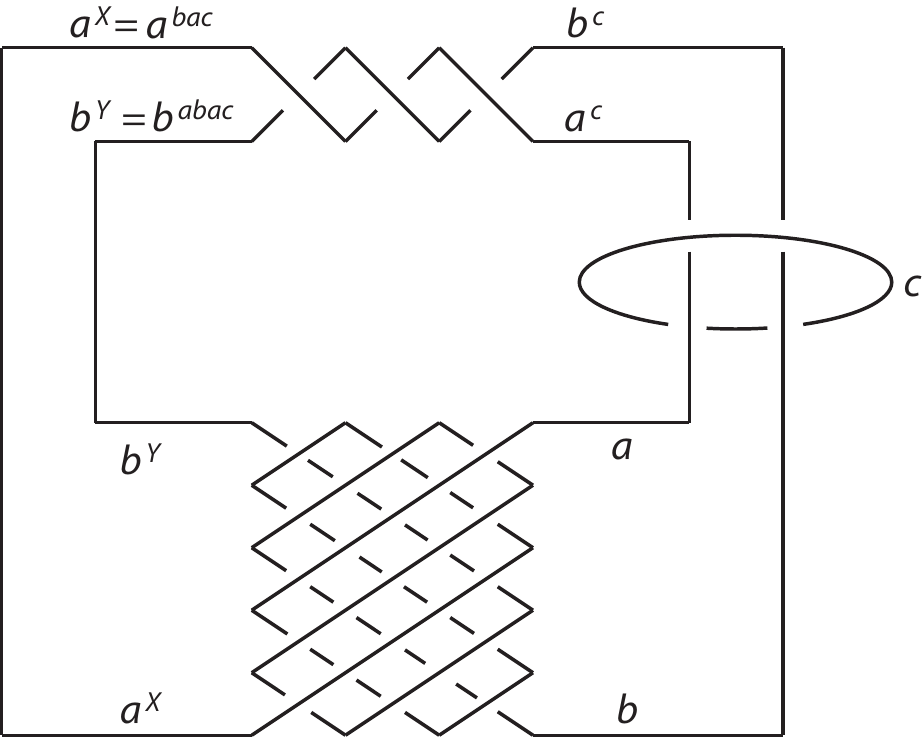}}}
\caption{The link $L(3, 3/5) \cup C$.}
\label{F:L(3,3/5)UC}
\end{figure}

We begin by finding a presentation for the involutory quandle $Q_2(L)$ for the link $L = L(k, p/q) \cup C$.  We label three arcs with quandle elements $a, b$ and $c$, as shown in Figure \ref{F:L(3,3/5)UC}; these will be the generators of our presentation. There are three relations in $Q_2(L)$.  The first, $R1$, comes from following the component $C$ as it crosses under $L(k, p/q)$:

$$R1: \qquad c^{ab} = c \ {\rm or}\ c^a = c^b$$

This relation has some useful corollaries.

\begin{lemma}\label{L:r1}
For any $z \in Q_2(L)$, we have
\begin{enumerate}
	\item $z^{aca} = z^{bcb}$
	\item $z^{cab} = z^{abc}$ and $z^{cba} = z^{bac}$
	\item For all $w \in \{ca, ac, cb, bc\}$, $z^{wab} = z^{baw}$ and $z^{wba} = z^{abw}$.
	\item For all $w \in \{ca, ac, cb, bc\}$, $z^{w^2ab} = z^{abw^2}$ and $z^{w^2ba} = z^{baw^2}$.
\end{enumerate}
\end{lemma}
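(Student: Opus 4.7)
The plan is to exploit the primary relation $R1$ in two complementary forms: directly as $c^a = c^b$, and via its associated secondary relation, which by Winker's recipe reads $z^{bacabc} = z$ for every $z \in Q_2(L)$. Writing $S_x$ for the inner automorphism $y \mapsto y^x$, the secondary relation becomes $S_c S_b S_a S_c S_a S_b = \mathrm{id}$, which rearranges (using $S_x^{-1} = S_x$ in an involutory quandle) to
\[
S_c S_b S_a = S_b S_a S_c,
\]
so $S_c$ commutes with the product $S_b S_a$. Applying the same derivation starting from $c^{ba} = c$ (which is immediate from $c^{ab} = c$ in an involutory quandle) yields the companion identity $S_c S_a S_b = S_a S_b S_c$. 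These two commutations, together with $a^2 = b^2 = c^2 = e$ in the exponent, will do all the work.

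Part (1) falls out at once from Lemma \ref{leftassoc}, which gives $z^{aca} = z^{c^a}$ and $z^{bcb} = z^{c^b}$; since $c^a = c^b$, the two are equal. Part (2) is then simply the translation of the two commutations above: $z^{cab} = z^{abc}$ says exactly $S_b S_a S_c = S_c S_b S_a$, and $z^{cba} = z^{bac}$ says $S_a S_b S_c = S_c S_a S_b$.

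For part (3) I would handle the eight sub-identities (two for each of the four choices of $w$) by a short case analysis. In four of the cases the exponent word contains a cancellable $a^2$ or $b^2$, after which the identity collapses to $S_{c^a} = S_{c^b}$, which holds by part (1). In the other four cases the identity rearranges in a single step by one of the commutations from (2). Part (4) then follows from (3) applied twice: setting $y = z^w$,
\[
z^{w^2 ab} = y^{wab} = y^{baw} = z^{w\,baw} = (z^{wba})^w = (z^{abw})^w = z^{ab\,w^2},
\]
and the analogous two-step chain delivers $z^{w^2 ba} = z^{ba\,w^2}$.

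The only real obstacle is the bookkeeping in part (3): each of the eight short word identities must be checked individually. Each, however, collapses immediately once the two commutations from (2) and the identity $S_{c^a} = S_{c^b}$ are in hand, so no genuinely new ingredient is needed beyond the derivation of those commutations from the secondary relation.
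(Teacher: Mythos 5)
Your proof is correct and follows essentially the same route as the paper: part (1) directly from $c^a=c^b$ via Lemma \ref{leftassoc}, part (3) by the same eight-case check (your four-and-four split into square-cancellation cases and commutation cases is exactly right), and part (4) by applying (3) twice with the intermediate element $z^w$. The only difference is one of packaging: you extract the part (2) commutations $S_cS_bS_a=S_bS_aS_c$ and $S_cS_aS_b=S_aS_bS_c$ from the secondary relation $z^{bacabc}=z$, while the paper gets them from part (1) by inserting $a^2$ or $b^2$ (e.g.\ $z^{cab}=z^{a(aca)b}=z^{a(bcb)b}=z^{abc}$); since the secondary relation is itself just $R1$ rewritten through Lemma \ref{leftassoc}, these are the same computation in different notation.
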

\begin{proof}
Since $c^a = c^b$, we immediately get $z^{aca} = z^{c^a} = z^{c^b} = z^{bcb}$.  Then $z^{cab} = z^{a(aca)b} = z^{a(bcb)b} = z^{abc}$.  Similarly, $z^{cba} = z^{b(bcb)a} = z^{b(aca)a} = z^{bac}$.

For part (3), we consider the case when $w = ca$.  Then, using part (1), $z^{caab} = z^{cb} = z^{bbcb} = z^{baca}$ and $z^{caba} = z^{aacaba} = z^{abcbba} = z^{abca}$.  The other cases are proved similarly.  Part (4) is just the application of (3) twice.
\end{proof}

The remaining relations come from following the strands of the two-bridge knot through the block of half-twists, and then through the $p/q$-tangle.  The arcs on the right side of the block of half-twists are labeled $a^c$ and $b^c$, as in Figure \ref{F:L(3,3/5)UC}. The labels on the arcs on the left side of the block are easily determined by an inductive argument.

\begin{lemma} \label{L:halftwists}
The arcs on either side of the block of $k$ right-handed half-twists are labeled as shown below (for $k$ even and $k$ odd).  Here $X = (ba)^tc$ and $Y = (ba)X = (ba)^{t+1}c$. (If $k < 0$, there are $\vert k \vert$ left-handed half-twists; the same formulas hold, where $(ba)^{-1} = \overline{ba} = ab$.)
$$\scalebox{1}{\includegraphics{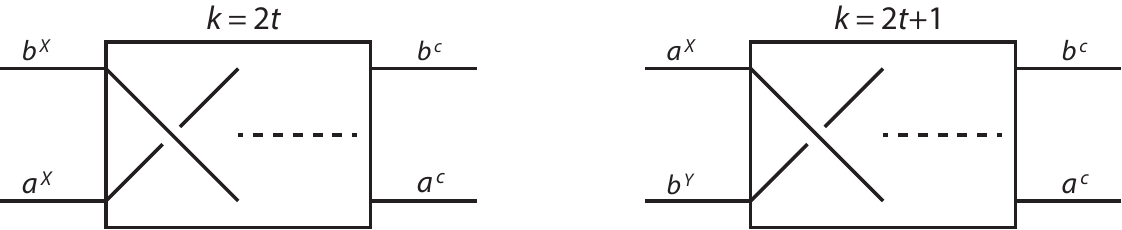}}$$
\end{lemma}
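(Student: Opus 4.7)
The plan is to proceed by induction on $|k|$. For the base case $k=0$, with no crossings the left-hand arcs coincide with the right-hand arcs labeled $a^c$ and $b^c$, and setting $t=0$ gives $X=c$, so the claimed formulas reduce correctly.

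For the inductive step, I would assume the labels are as claimed after $k$ half-twists and add one more crossing on the left. At the new crossing the overstrand label is unchanged while the understrand label $u$ becomes $u^v$, where $v$ is the overstrand label; I would then use Lemma \ref{leftassoc} to re-associate $u^v$ into the form (generator)$^w$ for a word $w$ in $\{a,b,c\}$, and simplify $w$ to $(ba)^s c$ for the value of $s$ matching the new $t$.

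The simplification rests on two ingredients: the secondary relation $y^{x_j^2}=y$, which permits cancellation of adjacent identical generators inside an exponent, and the identity $b^{a}=b^{ba}$ (with its mirror $a^b=a^{ab}$), which follows from axiom A1 via $b^{ba}=(b^b)^a=b^a$. In the even-to-odd step the computation is short: the new top label simplifies to $b^{aX}$, which by the second identity equals $b^{baX}=b^Y$, while the new bottom is the unchanged old top $a^X$, and $X$ is the same for the new value of $t$. In the odd-to-even step the computation is longer: the new top label is $(a^X)^{b^Y}=a^{X\bar Y bY}$, and after successive $cc$-, $bb$-, and $(ba)(ab)$-cancellations the exponent collapses to $a\cdot X'$ where $X'=(ba)^{t+1}c$ is the new $X$; a final $a^a=a$ reduction gives $a^{X'}$. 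The new bottom is the old top $b^Y$, and $Y$ for the old $t$ equals $X'$ for the new $t$, so this matches $b^{X'}$.

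The main obstacle will be the bookkeeping in the odd-to-even case, specifically ensuring that the telescoping $(ba)^t(ab)^t=1$ and the boundary adjustment $(ab)^{t+1}b=(ab)^t \cdot a$ interact properly with the surrounding letters in $X\bar Y bY$. The case $k<0$ is handled by the identical argument with $ba$ replaced by $\overline{ba}=ab$ throughout, which is why the statement of the lemma accommodates the substitution $(ba)^{-1}=ab$.
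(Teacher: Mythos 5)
Your proposal is correct and matches the paper's approach: the paper offers no written proof beyond the remark that the labels ``are easily determined by an inductive argument,'' and your induction on $|k|$ --- relabeling at each new crossing via the Wirtinger rule, re-associating with Lemma~\ref{leftassoc}, and cancelling with $x^2=1$ in the exponent together with $b^{ba}=b^a$ from axiom A1 --- is exactly that argument, carried out in full. The exponent computation $X\bar{Y}bY = (ba)^t cc(ab)^{t+1}b(ba)^{t+1}c = a(ba)^{t+1}c$ in your odd-to-even step checks out, as does the parity bookkeeping for $t$.
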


The following lemma will be useful (as in Lemma \ref{L:halftwists}, $X = (ba)^tc$ and $Y = (ba)X = (ba)^{t+1}c$):

\begin{lemma} \label{L:bYaX}
For any $z \in Q_2(L)$, $z^{b^Y a a^X} = z^{a^X a b^Y}$ and $z^{b^X a a^X} = z^{a^X a b^X}$.
\end{lemma}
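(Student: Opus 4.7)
The plan is to expand both sides of each identity using Lemma \ref{leftassoc} and reduce everything to a single word-level equivalence. Applying Lemma \ref{leftassoc}, $z^{b^Y a a^X} = z^{\overline{Y} b Y a \overline{X} a X}$ and $z^{a^X a b^Y} = z^{\overline{X} a X a \overline{Y} b Y}$, with analogous expressions for the $b^X$ versions in part (ii). Since $Y = (ba)X$, I have $\overline{Y} = \overline{X}ab$, so $\overline{Y} b Y = \overline{X} (abbba) X$; because $b^2$ is trivial in an involutory quandle, $abbba \equiv aba$ inside the exponent, and hence $\overline{Y} b Y \equiv \overline{X} aba X$. After this step, all four exponents are framed by a common $\overline{X}$ on the left and $X$ on the right.

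Let $K = X a \overline{X} = (ba)^t cac (ab)^t$. Stripping the common frame, part (i) reduces to $abaKa \equiv aKaba$ and part (ii) reduces to $bKa \equiv aKb$. Prepending (or appending) the same letter to equivalent words preserves equivalence, so $abaKa \equiv aKaba$ is equivalent to $baKa \equiv Kaba$ (prepend $a$, using $a^2 \equiv \emptyset$), and then to $baK \equiv Kab$ (append $a$). Once $baK \equiv Kab$ is established, prepending $b$ to both sides gives $aK \equiv bKab$, and appending $b$ then gives $aKb \equiv bKa$, which is part (ii). Thus both halves of the lemma follow from the single identity $baK \equiv Kab$.

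To establish $baK \equiv Kab$: write $baK = (ba)^t (bacac)(ab)^t$ and $Kab = (ba)^t (cacab)(ab)^t$, so it suffices to show $bacac \equiv cacab$. By Lemma \ref{L:r1}(2), $bac \equiv cba$, so $bacac \equiv cbaac$, which collapses to $cbc$ after cancelling the central $aa$; similarly $cab \equiv abc$, so $cacab \equiv caabc$, which also collapses to $cbc$. Both words reduce to $cbc$, completing the proof. I do not anticipate any substantial obstacle; the main discipline is careful bookkeeping of the $a^2 \equiv b^2 \equiv \emptyset$ reductions that are allowed in an involutory word exponent.
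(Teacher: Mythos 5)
Your proof is correct, but it takes a genuinely different route from the paper's. The paper expands both $z^{b^Y a a^X}$ and $z^{a^X a b^Y}$ via Lemma~\ref{leftassoc} and then grinds each exponent word down to a common normal form, $c(ab)^{4t+1}(ac)^3$, by sliding the $c$'s through the $(ab)$-powers with the commutation rules of Lemma~\ref{L:r1}; the second identity is then dispatched with ``similarly'' (the sentence announcing this in the paper is in fact truncated). You instead exploit the conjugation structure: since $Y=(ba)X$, you get $\bar Y b Y \equiv \bar X (aba) X$ modulo $b^2$, which puts all four exponents in a common frame $\bar X(\,\cdot\,)X$; setting $K = Xa\bar X$ reduces part one to $abaKa \equiv aKaba$ and part two to $bKa \equiv aKb$, and cancelling involutions shows both are consequences of the single identity $baK \equiv Kab$, which after stripping the $(ba)^t$ and $(ab)^t$ frames comes down to $bacac \equiv cacab$ --- and both sides collapse to $cbc$ by Lemma~\ref{L:r1}(2). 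All your moves are legitimate: prepending, appending, and framing preserve equivalence of exponent actions because each generator acts as an involution; subword rewriting with the universally quantified identities of Lemma~\ref{L:r1} is valid; and the cancellations $a^2 \equiv b^2 \equiv$ (empty word) in exponents are exactly Winker's relations $y^{x_j^2}=y$, which the paper justifies in Section~\ref{S:quandles}. What your approach buys: it eliminates all $t$-dependent bookkeeping (no $(ab)^{2t+1}$-style computations), and it derives both halves of the lemma from one three-letter core identity rather than proving the second half by analogy. What the paper's computation buys instead is the explicit common value $c(ab)^{4t+1}(ac)^3$ of both exponents --- a sharper piece of information than bare equality, though it is not used elsewhere in the paper.
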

\begin{proof}
We will do the proof for the first case; the second case.  We use Lemmas \ref{leftassoc} and \ref{L:r1}:
$$z^{b^Y a a^X} = z^{\bar Y b Y a \bar X a X} = z^{c(ab)^{t+1}b(ba)^{t+1}c a c(ab)^t a (ba)^t c} = z^{c(ab)^{2t+1}ac a c(ab)^{2t} a c}= z^{c(ab)^{4t+1}(ac)^3}$$
$$z^{a^X a b^Y} = z^{\bar X a X a \bar Y b Y} = z^{c(ab)^t a (ba)^t c a c(ab)^{t+1}b(ba)^{t+1}c} = z^{c(ab)^{2t}acac(ab)^{2t+1}ac} = z^{c(ab)^{4t+1}(ac)^3}$$
Hence $z^{b^Y a a^X} = z^{a^X a b^Y}$.

Similarly, $z^{b^X a a^X} = z^{a^X a b^X}$.
\end{proof}

To see the effect of the $p/q$-tangle, we redraw the tangle as shown in Figure \ref{F:tangle}.  Notice we now have $q-1$ arcs crossing under the arc labeled $a$, $p$ arcs crossing under the arc $a^X$, and $(q-1)-p$ arcs crossing under the arc $b^Y$.  The strands alternate between crossing under $a$ and crossing under $a^X$ or $b^Y$.  For example, in the $3/5$-tangle, we get the relations $a^X = a^{a^Xab^Ya}$ and $b^Y = b^{a^Xa a^Xa}$.  By Lemma \ref{L:bYaX} we can rearrange the factors $a^Xa$ and $b^Ya$ in the exponents as desired.

\begin{figure}[htbp]
\centerline{\scalebox{.75}{\includegraphics{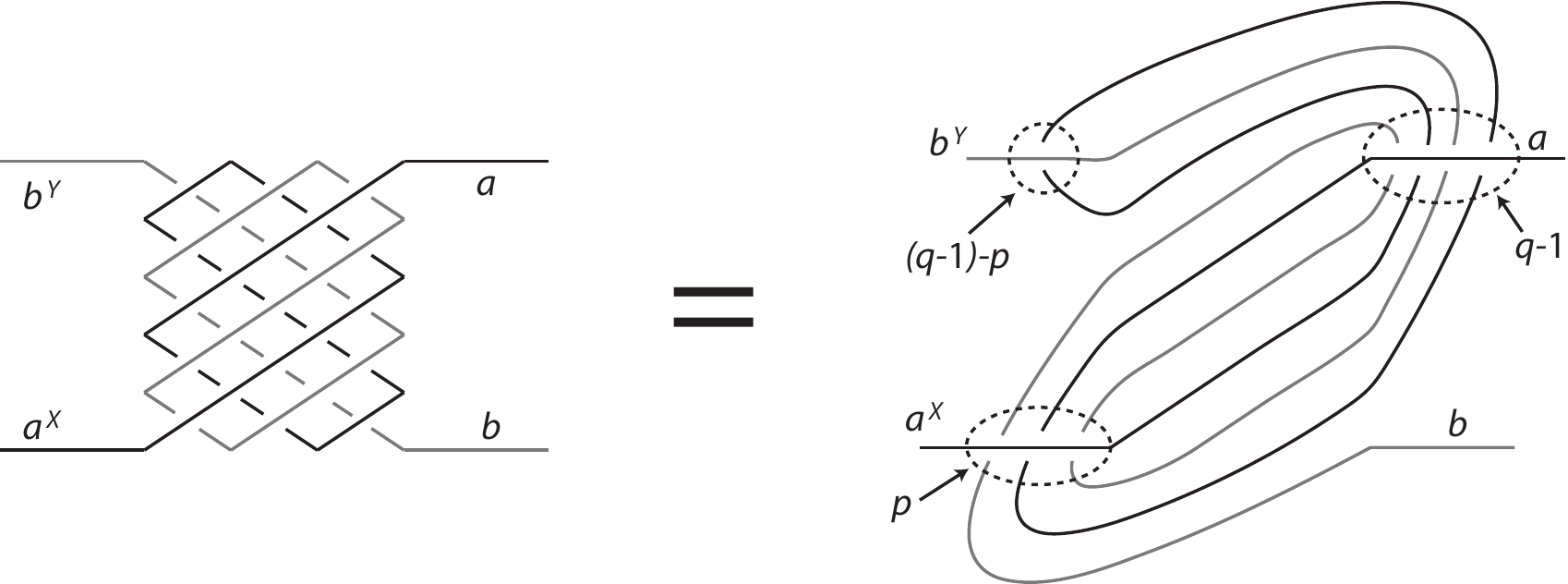}}}
\caption{Redrawing the $p/q$ tangle (illustrated with the 3/5-tangle).}
\label{F:tangle}
\end{figure}

To see how many of each factor there are, we can imagine drawing the path from $a$ to $a^X$ (or $b$ or $b^Y$) as a straight line of slope 1 on a plane tiled with $p$ by $q$ rectangles, as in Figure \ref{F:grid}.  The vertical lines alternately represent the left and right sides of the $p/q$ tangle, and the horizontal lines alternately represent the top and bottom of the tangle.  Every time the line crosses the right side (except for the starting point), there is a factor of $a$; every time it crosses the bottom, there is a factor of $a^X$ (for $k$ odd; $b^X$ if $k$ is even).  This allows us to write the relations induced by the $p/q$-tangle, based on the parities of $k, p, q$.  As in Lemma \ref{L:halftwists}, $X = (ba)^tc$ and $Y = (ba)^{t+1}c$, where $t = k/2$ if $k$ is even and $t = (k-1)/2$ if $k$ is odd. (Since $\gcd(p,q) = 1$, $p$ and $q$ cannot both be even.)

\begin{figure}[htbp]
\centerline{\scalebox{.75}{\includegraphics{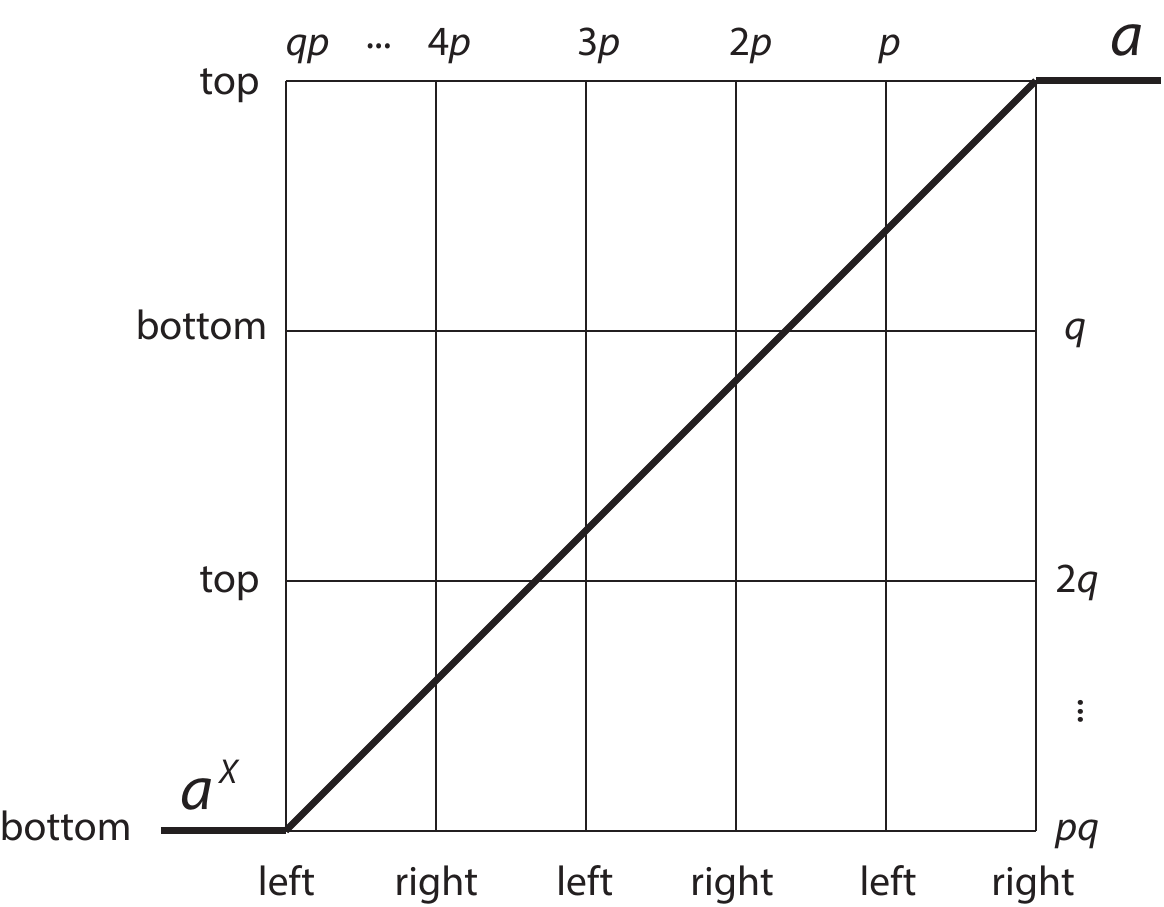}}}
\caption{Drawing the path from $a$ to $a^X$ on a grid (illustrated for the 3/5-tangle).}
\label{F:grid}
\end{figure}

\begin{center}
\begin{tabular}{c|c|c|c|c}
$k$ & $p$ & $q$ & $R2$ & $R3$ \\ \hline
\rule[-.3\baselineskip]{0pt}{.25in} odd & odd & odd & $a^{(a^X a)^{(p-1)/2} (b^Y a)^{(q-p)/2}} = a^X$ & $b^{(a^X a)^{(p+1)/2} (b^Y a)^{(q-p-2)/2}} = b^Y$ \\
\rule[-.3\baselineskip]{0pt}{.25in} odd & even & odd & $a^{(a^X a)^{p/2} (b^Y a)^{(q-p-1)/2}} = b^Y$ & $b^{(a^X a)^{p/2} (b^Y a)^{(q-p-1)/2}} = a^X$ \\
\rule[-.3\baselineskip]{0pt}{.25in} odd & odd & even & $a^{(a^X a)^{(p-1)/2} (b^Y a)^{(q-p-1)/2}a^X} = b$ & $(a^X)^{(aa^X)^{(p-1)/2} (a b^Y)^{(q-p-1)/2}a} = b^Y$ \\
\rule[-.3\baselineskip]{0pt}{.25in} even & odd & odd & $a^{(b^X a)^{(p-1)/2} (a^X a)^{(q-p)/2}} = b^X$ & $b^{(b^X a)^{(p+1)/2} (a^X a)^{(q-p-2)/2}} = a^X$ \\
\rule[-.3\baselineskip]{0pt}{.25in} even & even & odd & $a^{(b^X a)^{p/2} (a^X a)^{(q-p-1)/2}} = a^X$ & $b^{(b^X a)^{p/2} (a^X a)^{(q-p-1)/2}} = b^X$ \\
\rule[-.3\baselineskip]{0pt}{.25in} even & odd & even & $a^{(b^X a)^{(p-1)/2} (a^X a)^{(q-p-1)/2}b^X} = b$ & $(b^X)^{(a b^X)^{(p-1)/2} (a a^X)^{(q-p-1)/2}a} = a^X$ \\
\end{tabular}
\end{center}

Now we replace $X$ with $(ba)^t c$ and $Y$ with $(ba)^{t+1} c$ to derive the relations only in terms of $a, b, c$.  Using Lemmas \ref{leftassoc} and \ref{L:r1}, we do this for relation $R2$ when $k, p, q$ are all odd:
\begin{align*}
a^{(ba)^t c} &= a^{(c(ab)^t a (ba)^t c a)^{(p-1)/2} (c (ab)^{t+1} b (ba)^{t+1} c a)^{(q-p)/2}} \\
&= a^{(c(ab)^{2t}aca)^{(p-1)/2}(c(ab)^{2t+1}aca)^{(q-p)/2}} \\
&= a^{((ab)^{2t}(ca)^2)^{(p-1)/2}((ab)^{2t+1}(ca)^2)^{(q-p)/2}} \\
&= a^{(ab)^{(kq-p)/2-t}(ca)^{q-1}}\\
\implies a &= a^{(ab)^{(kq-p)/2-t}(ca)^{q-1}c(ab)^t} = a^{(ab)^{(kq-p)/2}(ca)^{q-1}c}\\
\implies a^{c(ac)^{q-1}} &= a^{(ab)^{(kq-p)/2}} = a^{(ba)^{(kq-p)/2}a} \\
\implies a^{(ca)^q} &= a^{(ba)^{(kq-p)/2}}
\end{align*}

Note that this works regardless of whether $kq-p$ is positive or negative (in particular, the reader can check that for any $x, y, z$, $z^{(xy)^m} = z^{x(yx)^mx}$, regardless of whether $m$ is positive or negative).  The derivations in the other cases are similar.  The relations $R2$ and $R3$ are as follows; we find that they depend only on the parity of $kq-p$:

\begin{center}
\begin{tabular}{c|c|c}
$kq-p$  & $R2$ & $R3$ \\ \hline
\rule[-.3\baselineskip]{0pt}{.25in} odd & $a^{(ca)^{q}} = b^{(ab)^{(kq-p-1)/2}}$ & $a^{(ac)^{q}} = b^{(ab)^{(kq-p-1)/2}}$ \\
\rule[-.3\baselineskip]{0pt}{.25in} even & $a^{(ca)^q} = a^{(ba)^{(kq-p)/2}}$ & $b^{(bc)^{q}} = b^{(ab)^{(kq-p)/2}}$ 
\end{tabular}
\end{center}

\section{The Cayley graph of $Q_2(L)$}

Now we will construct the Cayley graph for the quandle $Q_2(L)$ presented in the last section. There are two cases, depending on whether $kq-p$ is odd or even. The Cayley graphs are shown in Figures \ref{F:odd} and \ref{F:even}.

\begin{figure}[htbp]
\centerline{\scalebox{.75}{\includegraphics{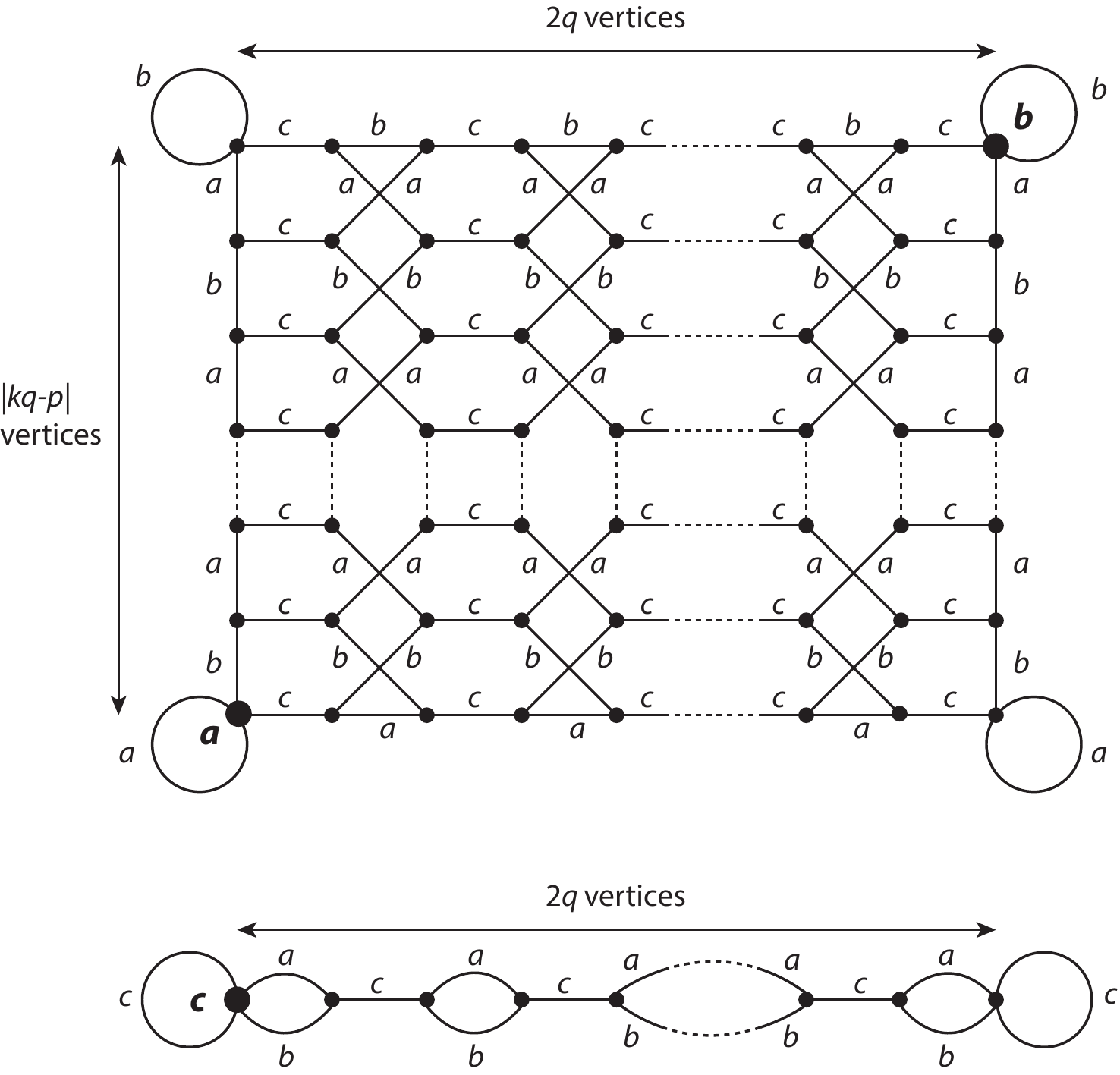}}}
\caption{The Cayley graph for $Q_2(L)$ with $kq-p$ odd. The generators $a$, $b$ and $c$ are indicated with larger vertices.}
\label{F:odd}
\end{figure}

\begin{figure}[htbp]
\centerline{\scalebox{.75}{\includegraphics{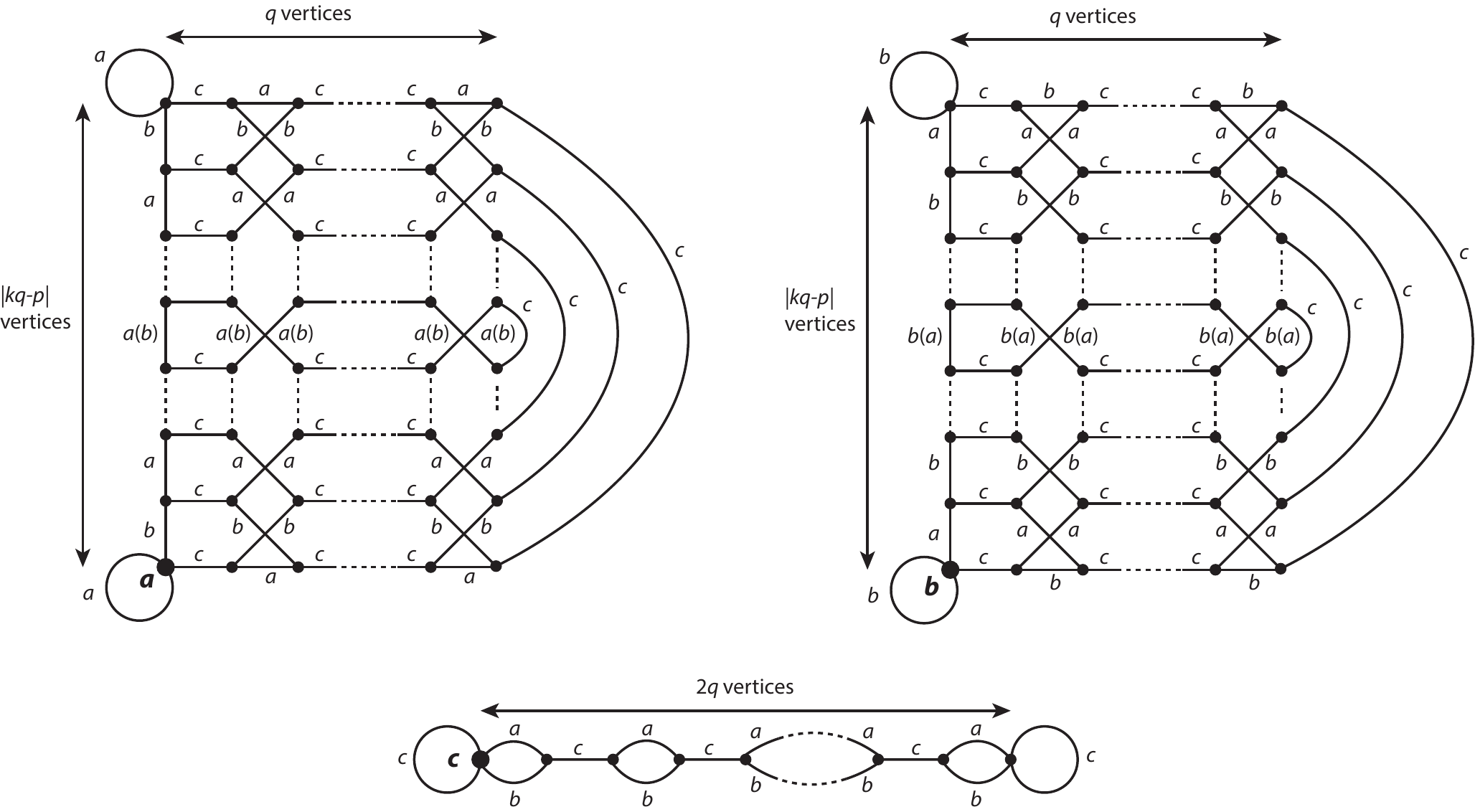}}}
\caption{The Cayley graph for $Q_2(L)$ with $kq-p$ even. The generators $a$, $b$ and $c$ are indicated with larger vertices.}
\label{F:even}
\end{figure}

\begin{theorem} \label{T:Cayley}
Suppose $L = L(k, p/q) \cup C$.  Then the Cayley graph of $Q_2(L)$ is the 2-component graph in Figure \ref{F:odd} when $kq-p$ is odd, and is the 3-component graph in Figure \ref{F:even} when $kq-p$ is even.
\end{theorem}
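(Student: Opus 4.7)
The plan is to verify the theorem by applying Winker's algorithm (described in Section~\ref{S:quandles}) to the presentation $\langle a,b,c \mid R1, R2, R3\rangle_2$ derived in Section~\ref{S:link}, and to show that the algorithm terminates with the diagrams of Figures~\ref{F:odd} and~\ref{F:even}, according to the parity of $kq-p$. Equivalently, I would show (a) that the proposed graphs define involutory quandles satisfying the three relations, so $Q_2(L)$ surjects onto them, and (b) that every vertex of the proposed graph represents a distinct element, so the surjection is an isomorphism.

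First, initialize the graph with vertices $a,b,c$, each carrying an axiom-$A1$ loop. Trace $R1$: $c^a = c^b$. This creates the edges from $c$ labelled $a$ and $b$ that terminate at the same vertex, which (together with the secondary relations of $R1$) produces the commutation identities catalogued in Lemma~\ref{L:r1}. Next trace $R2$ and $R3$: these introduce the alternating $a,c$-path $a, a^c, a^{ca}, a^{cac}, \dots, a^{(ca)^q}$ (and similarly $a^{(ac)^q}$ or $b^{(bc)^q}$) together with the $a,b$-path $b, b^a, b^{ab}, \dots$ of length $(kq-p\pm 1)/2$ or $(kq-p)/2$, and identify their endpoints according to the form of the relation. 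I would then read these off as the outer and inner cycles in the figure.

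To complete part (a), one checks directly that in each proposed graph the relations $R1, R2, R3$ hold at the marked generator vertices: this amounts to starting at $a$ (or $b$, or $c$) and following the edges spelled by each word, verifying one lands at the claimed vertex. For part (b), one runs the Winker procedure: at each newly-introduced vertex $v=x^w$, trace the secondary relations associated to $R1,R2,R3$ and the $n$-quandle relations $v^{a^2}=v$, $v^{b^2}=v$, $v^{c^2}=v$, and confirm each such walk is already a closed loop in the proposed graph. Lemma~\ref{L:r1} (and Lemma~\ref{L:bYaX}, applied implicitly via the derivation of $R2,R3$) is what makes these walks close up: it lets us commute blocks like $(ca)^2$ and $(ab)^m$ past each other in exponents, so the secondary relations reduce to moves around the individual cycles of the figure.

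The main obstacle is the parity bookkeeping. The relations $R2$ and $R3$ take different explicit forms in the two parity regimes of $kq-p$ (producing 2 vs.\ 3 components), so the two cases must be handled separately, and within each case the cycle lengths depend on $q$ and on $(kq-p\pm 1)/2$ or $(kq-p)/2$. The delicate step is showing that when the endpoint of a traced relation is identified with a preexisting vertex, no \emph{further} collapse propagates through the graph --- i.e., the only identifications are those already pictured. This is where the commutation identities in Lemma~\ref{L:r1} do the real work, since they guarantee that any loop obtained from a secondary relation decomposes into a product of elementary cycles already present in Figures~\ref{F:odd}/\ref{F:even}.
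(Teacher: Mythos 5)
Your overall skeleton --- run Winker's method on $\langle a,b,c \mid R1,R2,R3\rangle_2$, then verify that all secondary and involutory relations close up on the candidate graphs, with Lemma \ref{L:r1} supplying the commutation identities --- matches the paper's strategy. But there is a genuine gap in the middle of your plan: tracing $R1$, $R2$ and $R3$ does \emph{not} produce ``the outer and inner cycles'' of Figures \ref{F:odd} and \ref{F:even}. The three primary relations trace only boundary paths (in the odd case, the bottom and right side of the grid, plus one bigon of the $c$-component), creating on the order of $2q+\vert kq-p\vert$ vertices, whereas $\vert Q_2(L)\vert = 2q(\vert kq-p\vert+1)$. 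The entire interior of the grid, the chain of bigons in the $c$-component, and the full structure of the third component in the even case simply do not exist at the stage where you propose to ``read off'' the figure. The paper bridges exactly this gap with Lemmas \ref{L:odd} and \ref{L:even}: it first derives, as consequences of $R1$--$R3$, the auxiliary relations $R4$--$R7$ (resp.\ $R4$--$R8$) together with the doubly-indexed families $\alpha_{i,j}$ and $\beta_{i,j}$ ranging over the whole grid and the family $\gamma_i$, and it is the tracing of \emph{these} relations that builds the interior hexagons ($\alpha$, $\beta$), the corner loops ($R5$, $R6$), the top and left boundary ($R7$, $R8$), and the bigon chain capped by $R4$ ($\gamma_i$). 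Only once that scaffold is in place does the final check --- that $SR1$, $SR2$, $SR3$ cause no further collapse --- reduce to the circuit-decomposition argument you gesture at ($x^{(ac)^{2q}}=x^{(ba)^{kq-p}}=x$ around horizontal and vertical circuits, and $SR1$ around the hexagons).

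Two smaller points. First, your criterion (b) is misstated: distinctness of the vertices of the proposed graph is automatic inside the graph-quandle and proves nothing; what must be shown is that the assignment sending $g^w$ to the endpoint of the path spelled by $w$ from $g$ is well defined, i.e., that every secondary-relation walk closes at every vertex with no propagating identifications. You do describe this check operationally, but note that in Winker's procedure tracing a secondary relation at a vertex may \emph{introduce new vertices} --- ``confirming each walk is already a closed loop'' is the end-state verification, not the algorithm itself, and conflating the two is precisely what hides the missing interior-construction step above. Second, in the even case the $a$- and $b$-grids are distinct components, so you need the $b$-side relations ($R8$, $\alpha_{i,j,b}$, $\beta_{i,j,b}$) separately; your parity discussion flags the case split but not this doubling of the relation families.
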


\begin{corollary} \label{C:cardinality}
$\vert Q_2(L)\vert = 2q(\vert kq-p\vert +1).$
\end{corollary}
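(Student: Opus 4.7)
The plan is to read off the cardinality directly from the explicit Cayley graphs determined by Theorem \ref{T:Cayley}. Since the vertices of the Cayley graph of an involutory quandle are in one-to-one correspondence with its elements, we have $\vert Q_2(L)\vert$ equal to the total number of vertices appearing in Figure \ref{F:odd} or Figure \ref{F:even}, depending on the parity of $kq-p$. So the proof reduces to a combinatorial vertex count in each of the two cases.

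For the odd case, I would examine the two-component graph of Figure \ref{F:odd}. The structural features driving the count are the cycles of length $2q$ coming from the primary relations $a^{(ca)^q}=b^{(ab)^{(kq-p-1)/2}}$ and $a^{(ac)^q}=b^{(ab)^{(kq-p-1)/2}}$, together with the $(ab)$-chain of length $(\vert kq-p\vert-1)/2$ linking successive cycles. Organizing the vertices as a grid of $2q$ columns and $(\vert kq-p\vert+1)/2$ rows per component (or a product-like structure with those parameters) and doubling for the two components yields $2q(\vert kq-p\vert+1)$.

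For the even case I would apply the same procedure to Figure \ref{F:even}: identify the cycles induced by $a^{(ca)^q}=a^{(ba)^{(kq-p)/2}}$ and $b^{(bc)^q}=b^{(ab)^{(kq-p)/2}}$, tabulate the vertices of each of the three components, and add. Again the product $2q\cdot (\vert kq-p\vert+1)$ should fall out once the three component sizes are collected.

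The only real obstacle is making sure that what one sees in the figures is in fact the full Cayley graph, i.e.\ that no additional collapses occur when Winker's algorithm is run to completion. But this is precisely the content of Theorem \ref{T:Cayley}, so once that theorem is in hand the corollary is an elementary vertex-counting argument and no further quandle-theoretic work is required.
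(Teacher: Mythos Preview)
Your overall strategy---read the cardinality off the Cayley graphs of Theorem~\ref{T:Cayley} by counting vertices---is exactly what the paper does; indeed the paper offers no separate argument for the corollary, treating it as immediate from Figures~\ref{F:odd} and~\ref{F:even}.

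That said, your description of the component breakdown in the odd case is wrong. The two components in Figure~\ref{F:odd} are \emph{not} of equal size. One component is the grid containing \emph{both} generators $a$ and $b$; this grid has $2q\,\vert kq-p\vert$ vertices. The other component, containing $c$, is the chain of bigons governed by $R4:\ c^{(ac)^{2q}}=c$ and the relations $\gamma_i$, and it has exactly $2q$ vertices. Summing gives $2q\,\vert kq-p\vert + 2q = 2q(\vert kq-p\vert+1)$. Your ``doubling for the two components'' would only be correct if $\vert kq-p\vert = 1$.

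Likewise, in the even case the three components are not organized as you suggest: the $c$-component again contributes $2q$ vertices, while the $a$-component and the $b$-component are separate grids of $q\,\vert kq-p\vert$ vertices each (cf.\ the orders listed in Table~\ref{orders}). Once you correct these counts the total $2q(\vert kq-p\vert+1)$ follows, and nothing further is needed.
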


To prove Theorem \ref{T:Cayley}, we will find additional relations in $Q_2(L)$ before applying Winker's algorithm. 

\subsection{$kq-p$ odd}

We will first consider the case when $kq-p$ is odd. 

\begin{lemma} \label{L:odd}
When $kq-p$ is odd, the following relations hold in $Q_2(L)$ (in addition to $R1$, $R2$ and $R3$).
\begin{align*}
R4 &: c^{(ac)^{2q}} = c \\
R5 &: a^{(ca)^{2q}} = a \\
R6 &: b^{(cb)^{2q}} = b \\
R7 &: a^{(ba)^{(kq-p-1)/2}} = b^{(bc)^{q}} \\
\alpha_{i,j} &: a^{(ca)^i(ba)^j c} = a^{(ca)^i c (ba)^j} (0 \leq i \leq q; 0 \leq j \leq (\vert kq-p\vert -1)/2) \\
\beta_{i,j} &: a^{(ca)^i(ab)^j abc} = a^{(ca)^{i+1} (ba)^j b} (0 \leq i \leq q; 0 \leq j \leq (\vert kq-p\vert -1)/2) \\
\gamma_i &: c^{(ac)^i a} = c^{(ac)^i b} (0 \leq i) 
\end{align*}
\end{lemma}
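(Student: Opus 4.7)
The plan is to prove the many relations in three groups according to their logical dependencies. The commutation identities $\alpha_{i,j}$, $\beta_{i,j}$, and $\gamma_i$ are formal consequences of $R1$ and Lemma \ref{leftassoc}/Lemma \ref{L:r1}, and hold regardless of the parity of $kq-p$; the periodicity $R5$ follows directly from the fact that $R2$ and $R3$ share a common right-hand side; and $R4$, $R6$, $R7$ combine $R2$, $R3$, $R5$, and the commutation identities.

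For $\alpha_{i,j}$, I would first prove by induction on $j$ the stronger universal claim $z^{(ba)^j c}=z^{c(ba)^j}$ for every $z\in Q_2(L)$: the base case $j=1$ is $z^{bac}=z^{cba}$ from Lemma \ref{L:r1}(2), and the inductive step factors $z^{(ba)^{j+1}c}=(z^{ba})^{(ba)^j c}$, applies the hypothesis to $z^{ba}$, and re-collects the leading $c$ via a second use of Lemma \ref{L:r1}(2). Specializing $z=a^{(ca)^i}$ yields $\alpha_{i,j}$. Via the word identity $ca(ba)^j b = c(ab)^{j+1}$, the relation $\beta_{i,j}$ reduces to the companion universal claim $z^{(ab)^m c}=z^{c(ab)^m}$, proved by the same induction using the other half of Lemma \ref{L:r1}(2). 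For $\gamma_i$, note that $R1$ is equivalent to $c^{ab}=c$, so the claim is equivalent to $c^{(ac)^i ab}=c^{(ac)^i}$; I would prove this by induction on $i$, using Lemma \ref{L:r1}(3) to rewrite the tail $(ac)\cdot ab$ as $ba\cdot(ac)$ and then applying the inductive hypothesis in its equivalent form $c^{(ac)^{i-1}ba}=c^{(ac)^{i-1}}$.

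For $R5$, since $R2$ and $R3$ both equal $b^{(ab)^N}$ with $N=(kq-p-1)/2$, we have $a^{(ca)^q}=a^{(ac)^q}$; acting by $(ca)^q$ on both sides gives $a^{(ca)^{2q}}=a^{(ac)^q(ca)^q}$, and because in an involutory quandle the central pair $ac\cdot ca$ acts trivially (two applications of axiom A2 give $z^{acca}=z$), an easy induction on $q$ shows that $(ac)^q(ca)^q$ acts as the identity, whence $R5$. The main obstacle is $R4$, $R6$, and $R7$, which cannot be produced by the same symmetric cancellation alone because they sit in different orbits. For $R7$, applying $(ba)^N$ to $R3$ gives $b=a^{(ac)^q(ba)^N}$, so $b^{(bc)^q}=a^{(ac)^q(ba)^N(bc)^q}$, and I would reduce the exponent to $(ba)^N$ modulo the identity action on $a$ by iterated use of Lemma \ref{L:r1} together with the $\gamma_i$ relations, which permit trading $a$-actions for $b$-actions on elements in the orbit of $c$. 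The parallel argument with $R2$ in place of $R3$ produces a second identity $a^{(ba)^N}=b^{(cb)^q}$, and combining it with $R7$ forces $b^{(bc)^q}=b^{(cb)^q}$; the $R5$-style cancellation then yields $b^{(cb)^{2q}}=b$, which is $R6$. Finally, $R4$ is the most delicate: starting from the secondary relation $y^{(ac)^q a(ca)^q}=y^{(ca)^q a(ac)^q}$ for all $y$ (a consequence of $R2=R3$), and combining it with $R5$ and the $\gamma_i$'s—which control the interaction of $a$ and $b$ with the $c$-orbit—one eventually arrives at $c^{(ac)^{2q}}=c$. The main obstacle throughout will be the bookkeeping in the $R7$ derivation, where each commutation invoked must be justified on the precise element being acted upon.
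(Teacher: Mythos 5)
Your treatment of the commutation relations and of $R5$ is sound, and in fact slightly cleaner than the paper's: proving the universal claims $z^{(ba)^j c}=z^{c(ba)^j}$ and $z^{(ab)^m c}=z^{c(ab)^m}$ by induction from Lemma \ref{L:r1}(2) and then specializing $z=a^{(ca)^i}$ gives $\alpha_{i,j}$ and $\beta_{i,j}$ at once (the paper instead conjugates and uses $c^{(ab)^j}=c$), your $\gamma_i$ induction matches the paper's parity argument, and $R5$ is exactly the paper's cancellation. The genuine gap is $R4$. Your starting point is the secondary relation of the \emph{combined} relation $a^{(ca)^q}=a^{(ac)^q}$, which is $b$-free; after left-associating and cancelling, it says precisely that $(ac)^{2q}$ and $(ca)^{2q}$ act identically on every element. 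Applied to $y=c$ this yields only $c^{(ac)^{2q}}=c^{(ac)^{2q-1}a}$, and universally it yields $y^{(ac)^{4q}}=y$ --- both strictly weaker than $R4$. Indeed, a reflected chain of $4q$ vertices (twice the correct length), with $c$-loops at both ends, $a,b$-bigons, and $b$ acting as $a$, satisfies $R1$, every $\gamma_i$, and $y^{(ac)^{2q}}=y^{(ca)^{2q}}$ at every vertex of the component, yet violates $c^{(ac)^{2q}}=c$; so no amount of ``combining with $R5$ and the $\gamma_i$'s'' can arrive at $R4$ from these ingredients. The paper's proof uses the secondary relation of $R3$ \emph{itself}: equal elements act equally, so $x^{c(ac)^{2q-1}}=x^{(ba)^{kq-p-1}b}$ for all $x$, and setting $x=c$ the right side collapses to $c^b$ via $c^{ba}=c$, giving $c^{(ac)^{2q-1}}=c^b$ and then $R4$. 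The $b$-side of $R2$/$R3$ acting on $c$ is exactly the information your combined relation discards, and it is what bounds the length of the $c$-component.

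Two secondary problems. First, in your $R7$ reduction you invoke the $\gamma_i$ relations to ``trade $a$-actions for $b$-actions,'' but the $\gamma_i$ concern only the elements $c^{(ac)^i}$ in the component of $c$; they say nothing about the element $a^{(ac)^q(ba)^N}$ you are manipulating, which lies in the $a,b$-component. The correct universal tools here are parts (1)--(4) of Lemma \ref{L:r1}, which hold for all $z$; with them your intermediate identity $a^{(ba)^N}=b^{(cb)^q}$ is in fact derivable when $q$ is odd (e.g., $b^{a(ca)^q}=b^{aca(ca)^{q-1}}=b^{cb(ca)^{q-1}}=b^{(cb)^q}$, using $(q-1)$ even), so your $R6$ scheme can be repaired in that case. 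Second, you never address the parity of $q$: the paper's rewrites of $R2$ and $R3$ used for $R6$ and $R7$ take genuinely different forms for $q$ even (where one gets $b^{(ac)^q}=b^{(ca)^q}=a^{(ba)^N}$ directly), and your manipulations that rely on $(q-1)$ being even fail there, so the even case needs its own derivation as in the paper.
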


\begin{proof}
Let's recall the defining relations for $Q_2(L)$ (in addition to the relations common to any involutory quandle):
\begin{align*}
R1 &: c^{ab} = c \\
R2 &: a^{(ca)^{q}} = b^{(ab)^{(kq-p-1)/2}} \\
R3 &: a^{(ac)^{q}} = b^{(ab)^{(kq-p-1)/2}} 
\end{align*}
We will first prove relation $\gamma_i$. Observe that $c^{(ac)^i ab} = c^{ab(ac)^i}$ if $i$ is even, or $c^{ba(ac)^i}$ if $i$ is odd, by Lemma \ref{L:r1}.  In either case, by $R1$, $c^{(ac)^i ab} = c^{(ac)^i}$, so $c^{(ac)^i a} = c^{(ac)^i b}$, proving $\gamma_i$.

The secondary relation associated to relation $R3$ is (for any $x \in Q_2(L)$):
$$x^{a^{(ac)^{q}}} = x^{b^{(ab)^{(kq-p-1)/2}}} \implies x^{(ca)^{q}a(ac)^{q}} = x^{(ba)^{(kq-p-1)/2}b (ab)^{(kq-p-1)/2}}$$
$$\implies x^{c(ac)^{2q-1}} = x^{(ba)^{kq-p-1}b}$$
In particular, if $x = c$, we get
$$c^{c(ac)^{2q-1}} = c^{(ba)^{kq-p-1}b} \implies c^{(ac)^{2q-1}} = c^b\ ({\rm since\ }c^{ba} = c)$$
$$\implies c^{(ac)^{2q}} = c^{bac} = c^c = c$$
This proves relation $R4$.

Combining relations $R2$ and $R3$ gives $a^{(ca)^q} = a^{(ac)^{q}}$.  This implies $a^{(ca)^{2q}} = a$, proving relation $R5$.  

If $q$ is {\em odd}, we can rewrite $R2$ as follows (using Lemma \ref{L:r1}):
\begin{align*}
a^{(ca)^{q}} = b^{(ab)^{(kq-p-1)/2}} &\implies a^{(ca)^{q}(ba)^{(kq-p-1)/2}} = b \\
&\implies a^{(ab)^{(kq-p-1)/2}(ca)^{q}} = b \\ 
&\implies a^{(ab)^{(kq-p-1)/2}} = b^{(ac)^{q}}
\end{align*}
Similarly, we rewrite $R3$:
\begin{align*}
a^{(ac)^{q}} = b^{(ab)^{(kq-p-1)/2}} &\implies a^{(ac)^{q}(ba)^{(kq-p-1)/2}} = b \\
&\implies a^{(ab)^{(kq-p-1)/2}(ac)^{q}} = b \\
&\implies a^{(ab)^{(kq-p-1)/2}} = b^{(ca)^{q}}
\end{align*}
In the same way, if $q$ is {\em even}, we find $b^{(ac)^{q}} = b^{(ca)^{q}} = a^{(ba)^{(kq-p-1)/2}}$.  In either case, we get $b^{(ca)^{2q}} = b$.  But for any $x \in Q_2(L)$, we have $x^{(ca)^2} = x^{caca} = (x^c)^{aca} = (x^c)^{bcb} = x^{(cb)^2}$ (by Lemma \ref{L:r1}).  Hence we get $b^{(cb)^{2q}} = b$, proving $R6$.

The derivation of $R7$ also depends slightly on the parity of $q$. If $q$ is {\em odd} (so $q-1$ is even), then 
$$a^{(ba)^{(kq-p-1)/2}} = a^{(ab)^{(kq-p-1)/2}a} = b^{(ca)^{q}a} = b^{b(ca)^{q-1}c} = b^{b(cb)^{q-1}c} = b^{(bc)^{q}}.$$
If $q$ is even, then
$$a^{(ba)^{(kq-p-1)/2}} = b^{(ac)^{q}} = b^{(bc)^{q}}.$$

To derive $\alpha_{i,j}$, observe that (using Lemma \ref{L:r1})
$$a^{(ca)^i(ba)^j c (ab)^j} = a^{(ca)^i c^{(ab)^j}} = a^{(ca)^i c} \implies a^{(ca)^i(ba)^j c} = a^{(ca)^i c (ba)^j}.$$

Finally, to derive $\beta_{i,j}$, we observe that
$$a^{(ca)^i(ab)^j abc b (ab)^j a} = a^{(ca)^i c^{(ba)^{j+1}}} = a^{(ca)^i c} = a^{(ca)^{i+1} a} \implies a^{(ca)^i(ab)^j abc} = a^{(ca)^{i+1} (ba)^j b}.$$
\end{proof}

Now we can prove the first part of Theorem \ref{T:Cayley}. We first consider the larger component of the Cayley graph, containing generators $a$ and $b$.  Applying Winker's algorithm, we see that relation $R3$ traces out the bottom and right side of the grid, and relation $R7$ traces out the left side and the top.  (Note that if $kq-p < 0$, then $\vert kq-p-1\vert = \vert kq-p\vert + 1$.) Tracing and collapsing relations $R5$ and $R6$ gives the loops in the lower right and upper left corners.  Finally, tracing and collapsing $\alpha_{i,j}$ and $\beta_{i,j}$ gives the interior of the grid.  Note that $R2$ also traces out the bottom and right side of the grid (including the loop in the lower right corner).

For the second component, containing generator $c$, relation $\gamma_i$ gives a chain of bigons connected by edges labeled $c$, and relation $R4$ restricts the length of the chain and gives the loop on the right side.  Note that $R1 = \gamma_0$, so it is also satisfied.

It only remains to show that the secondary relations do not cause additional collapsing.  We denote the secondary relations for $R1$, $R2$ and $R3$ by $SR1$, $SR2$ and $SR3$ respectively. For any $x \in Q_2(L)$, these relations are:
\begin{align*}
SR1 &: x^{bacabc} = x \\
SR2 &: x^{(ac)^{2q}} = x^{(ba)^{kq-p}} \\
SR3 &: x^{(ca)^{2q}} = x^{(ba)^{kq-p}}
\end{align*}

The relation $SR1$ is satisfied by the hexagons in the grid.  Relations $SR2$ and $SR3$ are satisfied since $x^{(ac)^{2q}} = x^{(ca)^{2q}} = x^{(ba)^{kq-p}} = x$ at every point in the grid (making a complete circuit of the grid either horizontally or vertically).  It is easy to see that the relations are satisfied in the smaller component as well.  Hence the graph shown in Figure \ref{F:odd} is the Cayley graph of $Q_2(L)$ when $kq-p$ is odd.

\subsection{$kq-p$ even}

We now turn to the case when $kq-p$ is even.

\begin{lemma} \label{L:even}
When $kq-p$ is even, the following relations hold in $Q_2(L)$ (in addition to $R1$, $R2$ and $R3$).
\begin{align*}
R4 &: c^{(ac)^{2q}} = c \\
R5 &: a^{(ca)^{2q}} = a \\
R6 &: b^{(cb)^{2q}} = b \\
R7 &: a^{(ba)^{(kq-p)/2}} = a^{(ac)^{q}} \\
R8 &: b^{(ab)^{(kq-p)/2}} = b^{(bc)^{q}} \\
\alpha_{i,j,a} &: a^{(ca)^i(ba)^j c} = a^{(ca)^i c (ba)^j} (0 \leq i \leq q; 0 \leq j \leq \vert kq-p\vert/2) \\
\alpha_{i,j,b} &: b^{(cb)^i(ab)^j c} = b^{(cb)^i c (ab)^j} (0 \leq i \leq q; 0 \leq j \leq \vert kq-p\vert/2) \\
\beta_{i,j,a} &: a^{(ca)^i(ab)^j abc} = a^{(ca)^{i+1} (ba)^j b} (0 \leq i \leq q; 0 \leq j \leq \vert kq-p\vert/2) \\
\beta_{i,j,b} &: b^{(cb)^i(ba)^j bac} = b^{(cb)^{i+1} (ab)^j a} (0 \leq i \leq q; 0 \leq j \leq \vert kq-p\vert/2) \\
\gamma_i &: c^{(ac)^i a} = c^{(ac)^i b} (0 \leq i) 
\end{align*}
\end{lemma}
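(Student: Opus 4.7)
The strategy mirrors the proof of Lemma \ref{L:odd}: each derived relation will be extracted from the primary relations $R1$, $R2$, $R3$ together with Lemma \ref{L:r1} and the appropriate secondary relations.

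I would begin with the items that carry over from the odd case with essentially no change. Since $\gamma_i$ depends only on $R1$ and Lemma \ref{L:r1}, its proof is unchanged. The derivations of $\alpha_{i,j,a}$ and $\beta_{i,j,a}$ copy those of $\alpha_{i,j}$ and $\beta_{i,j}$ in Lemma \ref{L:odd}, since those arguments used only $R1$ and the identity $c^{(ab)^j}=c$; the $b$-versions $\alpha_{i,j,b}$ and $\beta_{i,j,b}$ follow from the same computation with $a$ and $b$ interchanged, which is legitimate because $R1$ is symmetric in $a$ and $b$. Relation $R4$ is obtained exactly as in the odd case, from the secondary relation of $R3$ with $y=c$, after simplification using $R1$. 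Relation $R8$ is simply $R3$ with its two sides transposed, so it needs no additional argument. Finally $R6$ follows from $R5$ using the identity $x^{(ca)^2}=x^{(cb)^2}$ from Lemma \ref{L:r1}(1), exactly as in the odd case.

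The main obstacle is proving $R5$ and $R7$, which are equivalent modulo $R2$: we have $a^{(ca)^{2q}}=(a^{(ca)^q})^{(ca)^q}=(a^{(ba)^{(kq-p)/2}})^{(ca)^q}$, and this equals $a$ precisely when $a^{(ba)^{(kq-p)/2}}=a^{(ac)^q}$. In the odd case these followed at once from the common right-hand side $b^{(ab)^{(kq-p-1)/2}}$ of $R2$ and $R3$; in the even case $R2$ and $R3$ involve disjoint pairs of generators and that shortcut is unavailable. My plan is to work with the secondary relation of $R2$, namely $y^{(ba)^{(kq-p)/2}(ac)^q a (ca)^q (ab)^{(kq-p)/2} a}=y$, and to collapse the exponent using the free-group identities $a(ca)^q=(ac)^q a$, $a(ab)^m a=(ba)^m$, and $(ac)^q(ca)^q=1$ (since $(ca)^q$ and $(ac)^q$ are free-group inverses). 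Feeding the resulting shorter word back through $R2$ with $y=a$, and if necessary combining with the secondary relation of $R3$ applied to the generator $a$, should force the missing identity $a^{(ca)^q}=a^{(ac)^q}$. Managing the sign conventions when $kq-p<0$ and pinning down exactly which test element to substitute in each secondary relation is where I expect the bulk of the effort to lie; once $R5$ and $R7$ are in hand, the remainder of the lemma is routine, and the subsequent Winker-algorithm verification that the Cayley graph is the three-component graph of Figure \ref{F:even} proceeds by the same template as the odd case.
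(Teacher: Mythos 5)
Your outline gets the easy items and the overall skeleton right, but the crux --- $R5$ and $R7$ --- is exactly where your plan fails. Your observation that $R5$ and $R7$ are equivalent modulo $R2$ is correct, and your cyclically permuted form of the secondary relation of $R2$ is also correct. However, the three free-group identities you propose ($a(ca)^q=(ac)^qa$, $a(ab)^ma=(ba)^m$, $(ac)^q(ca)^q=1$) only reduce that secondary relation, at $y=a$, to $a^{(ca)^{2q}}=a^{(ba)^{kq-p}}$; one further application of $R2$ leaves $a^{(ca)^q(ba)^{(kq-p)/2}}$, and no free-group manipulation brings this back to $a$. The missing ingredients, which the paper uses and you never invoke, are: (i) since $\gcd(p,q)=1$ and $kq-p$ is even, $q$ must be odd; and (ii) the $R1$-consequences in Lemma \ref{L:r1}(3)--(4), which let you slide $(ba)^{(kq-p)/2}$ across $(ca)^q$ at the cost of flipping it to $(ab)^{(kq-p)/2}$ --- an odd number of flips precisely because $q$ is odd. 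With these the computation closes: $a^{(ca)^q(ba)^{(kq-p)/2}} = a^{(ab)^{(kq-p)/2}(ca)^q} = a^{(ba)^{(kq-p)/2}(ac)^qa} = a^{(ca)^q(ac)^qa} = a$, using $R2$ again at the penultimate step. So the ``bulk of the effort'' is not sign conventions but the parity of $q$; and your fallback of combining with the secondary relation of $R3$ at $a$ does not help, since that gives $a^{(cb)^{2q}}=a^{(ba)^{kq-p}}$, which has the same obstruction.

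Two of your ``routine'' items also have errors. For $R4$: in the even case the secondary relation of $R3$ is $x^{(cb)^{2q}}=x^{(ba)^{kq-p}}$, so setting $x=c$ yields $c^{(cb)^{2q}}=c$, not $R4$; the paper instead sets $x=c$ in the secondary relation of $R2$, namely $x^{(ac)^{2q}}=x^{(ab)^{kq-p}}$, and kills the right side with $R1$. (Your relation can be converted to $R4$ using the $\gamma_i$, but it is not ``exactly as in the odd case.'') For $R6$: your derivation has a base-point error. Lemma \ref{L:r1}(1) gives $x^{(ca)^2}=x^{(cb)^2}$ for the \emph{same} element $x$, so from $R5$ you would obtain $a^{(cb)^{2q}}=a$, not $b^{(cb)^{2q}}=b$. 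In the odd case the premise of that step was $b^{(ca)^{2q}}=b$, available there because $R2$ and $R3$ share the right-hand side $b^{(ab)^{(kq-p-1)/2}}$; in the even case $a$ and $b$ lie in different components of the quandle, so no relation based at $a$ can produce $R6$. You need a separate computation combining the secondary relation of $R3$ with $R3$ itself, mirroring the $R5$ argument, which is what the paper does. Your remaining items ($\gamma_i$ from $R1$ alone, the $\alpha$'s and $\beta$'s via Lemma \ref{L:r1} and the $a\leftrightarrow b$ symmetry of $R1$, and $R8$ as $R3$ transposed) agree with the paper.
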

\begin{proof}
Recall that when $kq-p$ is even, the defining relations for $Q_2(L)$ are:
\begin{align*}
R1 &: c^{ab} = c \\
R2 &: a^{(ca)^{q}} = a^{(ba)^{(kq-p)/2}} \\
R3 &: b^{(bc)^{q}} = b^{(ab)^{(kq-p)/2}} 
\end{align*}
The secondary relations associated to these, for any $x \in Q_2(L)$, are:
\begin{align*}
SR1 &: x^{bacabc} = x \\
SR2 &: x^{(ac)^{2q}} = x^{(ab)^{kq-p}} \\
SR3 &: x^{(cb)^{2q}} = x^{(ba)^{kq-p}}
\end{align*}
Since $p$ and $q$ cannot both be even, $kq-p$ is even only when $k, q$ and $p$ are all odd, or when $k$ and $p$ are even and $q$ is odd.  So, in either case, we may assume $q$ is odd.

We first note that $R1$ implies the relations $\gamma_i$, as in Lemma \ref{L:odd}, and that $R1$ and $SR2$ imply $R4$ (letting $x = c$ in $SR2$).

Replacing $x$ with $a$ in $SR2$ gives
$$a^{(ac)^{2q}} = a^{(ab)^{kq-p}} \implies a^{a(ca)^{2q}a} = a^{a(ba)^{kq-p}a} \implies a^{(ca)^{2q}} = a^{(ba)^{kq-p}}$$
Now we can apply relation $R2$ to derive $R5$:
$$a^{(ca)^{2q}} = a^{(ba)^{kq-p}} = a^{(ba)^{(kq-p)/2}(ba)^{(kq-p)/2}} \underset{R2}{=} a^{(ca)^q(ba)^{(kq-p)/2}}$$
$$\underset{q\ odd}{=} a^{(ab)^{(kq-p)/2}(ca)^q} = a^{(ba)^{(kq-p)/2}(ac)^q a} \underset{R2}{=} a^{(ca)^q(ac)^qa} = a.$$
Similarly, we combine $SR3$ and $R3$ to derive $R6$.

$R7$ comes directly from $R2$, using the fact that $q$ is odd:
$$a^{(ca)^{q}} = a^{(ba)^{(kq-p)/2}} \implies a = a^{(ba)^{(kq-p)/2}(ac)^{q}} \underset{q\ odd}{=} a^{(ac)^{q}(ab)^{(kq-p)/2}} \implies a^{(ba)^{(kq-p)/2}} = a^{(ac)^{q}}.$$
Similarly, $R8$ is derived from $R3$.

The relations $\alpha_{i,j,a}$ and $\alpha_{i,j,b}$ follow immediately from part (2) of Lemma \ref{L:r1}.  Relations $\beta_{i,j,a}$ (and, similarly, $\beta_{i,j,b}$) also follow from Lemma \ref{L:r1}:
$$a^{(ca)^i(ab)^j abc} = a^{(ca)^i(ab)^j ab(ca)a} = a^{(ca)^{i+1}(ba)^j baa} = a^{(ca)^{i+1} (ba)^j b}.$$
\end{proof}

We complete the proof of Theorem \ref{T:Cayley} by showing the Cayley graph of $Q_2(L)$ when $kq-p$ is even is shown in Figure \ref{F:even}.  For the component containing generator $c$, as when $kq-p$ is odd, relation $\gamma_i$ gives a chain of bigons connected by edges labeled $c$, and relation $R4$ restricts the length of the chain and gives the loop on the right side.  Relation $R1$ is also satisfied.

For the component containing generator $a$, relation $R7$ traces the path around the outside of the graph, and relation $R5$ gives the loop in the top left (relation $R2$ also traces around the outside of the graph, including the loop).  Tracing and collapsing the relations $\alpha_{i,j,a}$ and $\beta_{i,j,a}$ fills in the interior of the graph.  The relations $R8$, $R6$, $\alpha_{i,j,b}$ and $\beta_{i,j,b}$ do the same for the component containing $b$.

Finally, it is straightforward to check that, as in the case when $kq-p$ is odd, all of the secondary relations are satisfied with no further collapsing.  Hence the graph shown in Figure \ref{F:even} is the Cayley graph of $Q_2(L)$ when $kq-p$ is even.

\section{Classifying links using finite involutory quandles}

In Table \ref{orders}, we summarize the results on links with finite involutory quandles from \cite{CHMS}, \cite{HS1} and the current paper, listing the orders of the quandles, the number of components and the orders of each component.  In many cases, this information is sufficient to distinguish the links.  For example, Hoste and Shanahan \cite{HS1} showed that two $(2,2,r)$-Montesinos links with the same involutory quandle are either equivalent or mirror images.  We can prove a similar result for $L(k,p/q) \cup C$.

\begin{theorem} \label{classify}
If unoriented links $L = L(k,p/q) \cup C$ and $L' = L(k', p'/q') \cup C$ have isomorphic involutory quandles, then $L$ and $L'$ are either equivalent or mirror images.
\end{theorem}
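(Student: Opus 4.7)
The plan is to extract the integers $q$ and $m := |kq-p|$ from $Q_2(L)$ as abstract quandle invariants, using the Cayley graph structure from Theorem \ref{T:Cayley} and the cardinality formula of Corollary \ref{C:cardinality}, and then to check that $(q, m)$ determines the triple $(k, p, q)$ up to the involution corresponding to mirror reflection.

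For the first step, the partition of $Q_2(L)$ into orbits of its inner automorphism group --- equivalently, the connected components of any Cayley graph --- is intrinsic to the quandle. By Theorem \ref{T:Cayley}, this partition has two parts when $kq-p$ is odd and three parts when $kq-p$ is even, so the parity of $kq-p$ is already an invariant of $Q_2(L)$. A closer look at the graphs in Figures \ref{F:odd} and \ref{F:even} shows that the orbit of $c$ is a chain of $q$ bigons joined by $c$-edges, with $2q$ elements; the other orbit in the odd case has $2qm$ elements, while the two other orbits in the even case each have $qm$ elements. Since $m \geq 1$ when $kq-p$ is odd and $m \geq 2$ when $kq-p$ is even, the $c$-orbit always realizes the minimum orbit size. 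Therefore $q$ can be read off as half the minimum orbit size, and then $m$ is determined by $|Q_2(L)| = 2q(m+1)$. In particular, $Q_2(L) \cong Q_2(L')$ forces $q = q'$ and $m = m'$.

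For the second step, I would use elementary number theory to show that $(q, m)$ determines $(k, p)$ up to mirror image. Write $m = jq + p_0$ with $0 < p_0 < q$, which is well-defined since $\gcd(m, q) = \gcd(p, q) = 1$. The two sign choices $kq - p = \pm m$, together with the normalization $0 < p < q$, produce exactly the two admissible pairs $(k_1, p_1) = (j+1,\, q-p_0)$ and $(k_2, p_2) = (-j,\, p_0)$, and these satisfy $k_1 + k_2 = 1$ and $p_1 + p_2 = q$. On the other hand, applying the flype rule $(k, p) \leftrightarrow (k+1, p+q)$ from Section \ref{S:link} to the mirror link $L(-k, -p/q) \cup C$ and renormalizing to $0 < p < q$ sends $(k, p)$ to $(1-k,\, q-p)$. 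This is precisely the involution swapping $(k_1, p_1)$ with $(k_2, p_2)$. Hence $(k, p)$ and $(k', p')$ are either equal or are exchanged by this mirror involution, giving $L \cong L'$ or $L \cong L'^{*}$.

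The main obstacle is the first step: one must verify directly from the Cayley graphs of Theorem \ref{T:Cayley} that the $c$-orbit always has exactly $2q$ elements, so that it can be identified as a minimum orbit and $q$ can be recovered. Once this structural fact is in hand, the remainder reduces to the elementary calculation above together with the flype rule for tangle normalization.
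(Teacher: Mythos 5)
Your proposal is correct and takes essentially the same route as the paper: recover $q$ and $m = |kq-p|$ from the sizes of the components (inner-automorphism orbits) of $Q_2(L)$, then show via the division algorithm and the normalization $0 < p < q$ that the two sign choices $kq-p = \pm m$ yield exactly the parameter pairs $(k,p)$ and $(1-k,\,q-p)$, which are mirror-related by the flype identity $L(1-k,(q-p)/q) = L(-k,-p/q)$. Your extra care in checking that the $c$-component always realizes the minimum orbit size (using $m \geq 1$ in the odd case and $m \geq 2$ in the even case, the latter forced since $m \equiv \pm p \pmod q$ is even there) just makes explicit a step the paper treats as immediate when it ``compares the orders of the components.''
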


\begin{proof}
Comparing the orders of the components of the involutory quandle, we immediately see that $q = q'$ and $\vert kq-p \vert = \vert k'q'-p' \vert$, so $\vert kq-p\vert = \vert k'q-p'\vert$.  There are two cases: $kq-p = k'q-p'$ and $kq-p = p' - k'q$.

As explained in Section \ref{S:link}, we may assume $0 < p, p' < q$.  Then, if $kq-p = k'q-p'$, the division algorithm implies $k = k'$ and $p = p'$, so $L$ and $L'$ are equivalent. 

On the other hand, if $kq-p = p'-k'q$, then $p'+p = (k+k')q$.  Since $0 < p'+p < 2q$, this means $p'+p = q$ and $k+k' = 1$, so $p' = q-p$ and $k' = 1-k$.  But then $L(k', p'/q) = L(1-k, (q-p)/q) = L((1-k)-1, ((q-p)-q)/q) = L(-k, -p/q)$, which is the mirror image of $L(k, p/q)$.  So $L'$ is the mirror image of $L$.
\end{proof}

\begin{table}
\begin{center}
\renewcommand{\arraystretch}{1.5}
\begin{tabular}{|c||c|c|c|}
\hline
Link & $|Q_2|$ & \# of Components & Orders of components  \\ \hline
$L_{p/q}$, $q$ odd & $q$ & 1 & $q$  \\ \hline
$L_{p/q}$, $q$ even & $q$ & 2 & $q/2$, $q/2$  \\ \hline
$T_{3,3}$ & 6 & 3 & 2, 2, 2  \\ \hline
$T_{3,4}$ & 12 & 1 & 12  \\ \hline
$T_{3,5}$ & 30 & 1 & 30  \\ \hline
$T_{2,q} \cup A$, $q$ odd & $2+2|q|$ & 2 & $2|q|$, 2 \\ \hline
$T_{2,q} \cup A$, $q$ even & $2+2|q|$ & 3 & $|q|$, $|q|$, 2 \\ \hline
$T_{2,3} \cup B$ & 18 & 2 & 12, 6  \\ \hline 
$L(1/2, 1/2, p/q; k)$, $q$ odd & $2(q+1)\vert (k-1)q - p\vert$ & 2 & \parbox{1in}{$2q\vert (k-1)q - p\vert$, \\$2\vert (k-1)q - p\vert$}  \\ \hline 
$L(1/2, 1/2, p/q; k)$, $q$ even & $2(q+1)\vert (k-1)q - p\vert$ & 3 & \parbox{1in}{$q\vert (k-1)q - p\vert$, \\$q\vert (k-1)q - p\vert$, \\$2\vert (k-1)q - p\vert$}  \\ \hline 
$L(k, p/q) \cup C$, $q$ odd & $2q(\vert kq - p\vert+1)$ & 2 & \parbox{1in}{$2q\vert kq - p\vert$, $2q$}  \\ \hline 
$L(k, p/q) \cup C$, $q$ even & $2q(\vert kq - p\vert+1)$ & 3 & \parbox{1.5in}{$q\vert kq - p\vert$, $q\vert kq - p\vert$, $2q$}  \\ \hline 
\end{tabular}
\end{center}
\caption{Orders of finite involutory quandles.}
\label{orders}
\end{table}

\end{document}